\NeedsTeXFormat{LaTeX2e}

\documentclass{birkjour}


\usepackage{graphicx}
\usepackage[ansinew]{inputenc}    
\usepackage{fancyhdr}
\usepackage{easybmat}
\usepackage{color}
\usepackage{verbatim}
\usepackage{amsmath,amstext,amssymb,}
\usepackage{mathrsfs,amscd}


\newtheorem{thm}{Theorem}[section]
\newtheorem{corollary}[thm]{Corollary}
\newtheorem{lemma}[thm]{Lemma}
\newtheorem{proposition}[thm]{Proposition}

\newtheorem{example}[thm]{Example}

\newcommand{\R}{{\mathbb{R}}}

\begin{document}

\title[Envelope of mid-planes of a surface]{{ Envelope of mid-planes of a surface and some classical notions of affine differential geometry}}

\author[A.Cambraia Jr. ]{Ady Cambraia Jr.}
\address{
Departamento de Matem\'atica, UFV, Vi\c cosa, Minas Gerais, Brazil}

\email{ady.cambraia@ufv.br}
\author[M. Craizer]{Marcos Craizer}
\address{Departamento de Matem\'atica-PUC-Rio, Rio de Janeiro, Brazil.}
\email{craizer@puc-rio.br}

\begin{abstract}
For a pair of points in a smooth locally convex surface in $3$-space, its mid-plane is the plane containing its mid-point and the intersection line of the corresponding pair of tangent planes. In this paper we show that the limit of mid-planes when one point tends to the other along a direction is the Transon plane of the direction. Moreover, the limit of the envelope of mid-planes is non-empty for at most six directions, and, in this case, it coincides with the center of the Moutard's quadric. These results establish a connection between these classical notions of affine differential geometry and the apparently unrelated concept of envelope of mid-planes of a surface.
We call the limit of envelope of mid-planes the {\it affine mid-planes evolute} and prove that, under some generic conditions, it is a regular surface in $3$-space.
\end{abstract}

\keywords{Transon planes, cone of B.Su, Moutard's quadrics, affine evolute.}

\thanks{The second author wants to thank CNPq for financial support during the preparation of this paper}

\subjclass{ 53A15}

\date{February 14, 2017}

\maketitle

\section{Introduction}
\label{intro}

For a pair of points in a smooth convex planar curve, its mid-line is the line that passes through its mid-point and the intersection of the corres\-pon\-ding tangent lines. The envelope of the $2$-parameter family of mid-lines is an important affine invariant symmetry set associated with the curve. It has applications in computer graphics and has been studied by many authors
(\cite{Sapiro},\cite{Giblin},\cite{Giblin2},\cite{Holtom}). It is well-known that the limit of the mid-lines when one point tends to the other is the affine normal and
 the limit of the envelope of mid-lines is the affine evolute of the curve.
 
The mid-plane is a natural generalization of the concept of mid-line to a smooth locally convex surface $S$: For $p_1,p_2\in S$, the mid-plane is the plane that passes through the mid-point $M$ of $p_1$ and $p_2$ and the intersection line of the tangent planes at $p_1$ and $p_2$. We verify that if we fix a tangent vector $T$ and make $p_2\to p_1$ along this direction, the mid-plane converges to the Transon plane of the surface at $p_1$ in the direction $T$ (\cite{Transon}). 

What can we say about the envelope of the mid-planes of a surface $S$ in $3$-space? For a general description of this envelope, see \cite{Cambraia}. In this paper we are interested in understanding this set when
$p_2\to p_1$. It turns out that, when $p_2\to p_1$ along the direction $T$, the limit of this envelope consists of the solution of a system
of $4$ equations. The first equation of this system defines the Transon plane of $T$ at $p_1$, the first and second define the line of the cone of B.Su of $T$ at $p_1$, and the first, second and third equations define the center of the Moutard quadric of $T$ at $p_1$. The notions of Transon Plane, cone of B.Su and Moutard's quadric are classical, but there are some modern references (\cite{Juttler},\cite{Buchin}). The close connection between these classical notions of affine differential geometry and the apparently unrelated concept of envelope of mid-planes is a main point of this paper.

But what happens with the fourth equation? We verify that the tangent directions that leads to some solution of this system of $4$ equations are zeros of a polynomial equation of degree $6$, thus they are at most $6$. The set of solutions of this system is an affine invariant set that, up to our knowledge, has not yet been considered. We call it the {\it  Affine Mid-Planes Evolute}. We verify that under some generic conditions, the affine mid-planes evolute is a regular surface in $3$-space.

This work is part of the doctoral thesis of the first author under the supervision of the second author.

\section{Some classical concepts in affine differential geometry} 

In this section, we recall the notions of Transon plane, B.Su's cone and Moutard's quadric of a given direction at a point of a smooth locally convex surface.

\subsection{Preliminaries}

Consider a smooth convex surface $S$ and a point $p_0\in S$. Assume that  $p_0=(0,0,0)$ and that the tangent plane at $p_0$ is $z=0$. Assume also that the axes $x$ and $y$ are orthogonal in the Blaschke metric of $S$. Then, close to $p_0$, $S$ is the graph of a function $f$ that can be written as
\begin{equation}\label{eq:GraphS}
f(x,y)=\frac{1}{2}(x^2+y^2)+f_3(x,y)+ f_4(x,y) +O(5), %
\end{equation}
where 
\begin{equation}\label{eq:GraphS1}
f_3(x,y)=\sum_{i=0}^3f_{3-i,i}x^{3-i}y^{i},\ \  f_4(x,y)=\sum_{i=0}^{4}f_{4-i,i}x^{4-i}y^i 
\end{equation}
are homogeneous polynomials of degree $k$, $k=3,4$, and $O(n)$ denotes some expression of order $\geq n$ in $(x,y)$.

We may also assume that the affine normal vector of $S$ at the origin is $(0,0,1)$. In this case, the apolarity condition implies
that $3f_{3,0}+f_{1,2}=0$ and $3f_{0,3}+f_{2,1}=0$ (see \cite{Nomizu}). Thus we can write 
\begin{equation}\label{eq:CubicaNormalizada}
f_3(x,y)=a(x^3-3xy^2)+b(y^3-3yx^2), 
\end{equation}
where $6a=C(e_1,e_1,e_1)$, $6b=C(e_2,e_2,e_2)$, $e_1=(1,0,0)$, $e_2=(0,1,0)$ and $C$ denotes the cubic form at the origin.

\subsection{Transon planes, cone of B.Su and Moutard's quadric}\label{sec:MedialCurve}

We begin with a two-hundred years old result of A.Transon (\cite{Transon}). For a mo\-dern reference, see \cite{Juttler}. 
For the sake of completeness, we give a proof below.

\begin{proposition}\label{prop:Transon}
Consider a smooth convex surface $S\subset\mathbb{R}^3$, $p_0\in S$ and $T\in T_{p_0}S$. Then the affine normal lines of the planar curves obtained as the intersection of $S$ with planes containing $T$ form a plane, which is called the {\it Transon plane} of the tangent $T$ at $p_0$.
\end{proposition}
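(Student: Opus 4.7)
The plan is a direct computation in the graph coordinates of \eqref{eq:GraphS}--\eqref{eq:CubicaNormalizada}. First, I would use the rotational freedom in the choice of Blaschke-orthonormal frame on $T_{p_0}S$ to assume $T=e_1=(1,0,0)$; this preserves the normal form, with $a$ in \eqref{eq:CubicaNormalizada} now equal to $\tfrac{1}{6}C(T,T,T)$. The pencil of planes containing the $x$-axis is then $\Pi_\mu : z=\mu y$ for $\mu\in\mathbb{R}\setminus\{0\}$, together with the limiting vertical plane $\{y=0\}$.

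For each such $\Pi_\mu$, the intersection curve $\gamma_\mu = S\cap\Pi_\mu$ is locally the graph $y=y_\mu(x)$ determined implicitly by $f(x,y_\mu(x))=\mu\,y_\mu(x)$. Since $\gamma_\mu$ is tangent to $T$ at $p_0$, one has $y_\mu(0)=y_\mu'(0)=0$; matching the lowest-order terms in the implicit equation against \eqref{eq:GraphS}--\eqref{eq:CubicaNormalizada} then gives $y_\mu''(0)=1/\mu$ and $y_\mu'''(0)=6a/\mu$, with no contribution from $b$ or from the quartic terms.

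The second ingredient is the affine normal of a planar graph $v=g(u)$ with $g(0)=g'(0)=0$ and $g''(0)>0$. A short computation using the affine arc length $d\sigma=g''(u)^{1/3}\,du$ shows that at the origin the affine normal has direction proportional to $\bigl(-g'''(0)/(3\,g''(0)),\,g''(0)\bigr)$. Taking $(u,v)=(x,y)$ as coordinates on $\Pi_\mu$, so that the generic point of $\Pi_\mu$ is $ue_1+v(0,1,\mu)$, this yields for $\gamma_\mu$ an affine normal line through $p_0$ whose 3D direction is $(-2a\mu,\,1,\,\mu)$; the limit case $\{y=0\}$, treated analogously in $(x,z)$ coordinates, gives direction $(-2a,\,0,\,1)$.

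To conclude, I would observe the identity $(-2a\mu,1,\mu)=(0,1,0)+\mu(-2a,0,1)$: as $\mu$ varies, the direction vectors sweep an affine line in $\mathbb{R}^3$, so the corresponding lines through $p_0$ all lie in the $2$-plane spanned by $(0,1,0)$ and $(-2a,0,1)$, whose equation is $x+2az=0$. This is the desired Transon plane. The only subtlety, more conceptual than computational, is that the affine normal of $\gamma_\mu$ lives in $\Pi_\mu$ and its magnitude depends on the (non-canonical) choice of area form on $\Pi_\mu$; however, its direction, and hence the associated affine normal line in $\mathbb{R}^3$, is well-defined, and that is all the statement requires.
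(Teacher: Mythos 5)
Your proof is correct and takes essentially the same approach as the paper's: reduce each planar section of the pencil to a planar graph via an affine projection, apply the affine-normal-of-a-graph formula, and observe that the resulting normal directions all lie in the plane $x+2az=0$. The only difference is cosmetic --- the paper parametrizes the pencil as $y=\lambda z$ and projects to the $xz$-plane (which normalizes $g''(0)=1$ and absorbs the $y=0$ case at $\lambda=0$), whereas your choice $z=\mu y$ requires the unnormalized formula $\bigl(-g'''(0)/(3g''(0)),\,g''(0)\bigr)$ and a separate limiting case, both of which you handle correctly.
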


\begin{proof}
We may assume that $S$ is the graph of $f$ given by equation \eqref{eq:GraphS}, $p_0=(0,0,0)$ and $T=(1,0,0)$. Take then a plane of the form $y=\lambda z$, $\lambda\in\mathbb{R}$. Using formulas \eqref{eq:GraphS} and \eqref{eq:GraphS1}, the projection of the corresponding section of $S$
on the $xz$ plane is given by
\begin{equation}\label{eq:proj}
z=\frac{1}{2}x^2+f_{3,0}x^3+\left( \frac{\lambda^2}{8}+\frac{\lambda}{2}f_{2,1}+f_{4,0} \right) x^4+ O(5).
\end{equation}
From appendix \ref{app1}, the affine normal direction of this projection is $(-2f_{3,0},1)$. Thus the affine normal of the section is contained in the 
plane $x+2f_{3,0}z=0$, which is independent of $\lambda$.
\end{proof}

Similar calculations show that when $S$ is the graph of a function given by \eqref{eq:GraphS}, the Transon plane at the origin in a direction $T=(\xi,\eta,0)$  is given by $G(\xi,\eta,X)=0$, where $X=(x,y,z)$ and 
\begin{equation}\label{planodetranson}
G(\xi,\eta,X)=\frac{\xi}{2}(\xi^2+\eta^2)x+\frac{\eta}{2}(\xi^2+\eta^2)y+f_3(\xi,\eta)z,
\end{equation}
(see also \cite{Juttler}). When the tangent plane is $z=0$ and $T$ is a tangent vector, we shall write simply $T=(\xi,\eta)$ instead of $T=(\xi,\eta,0)$.

\begin{figure}[ht]
  \centering
  \includegraphics[scale=0.65]{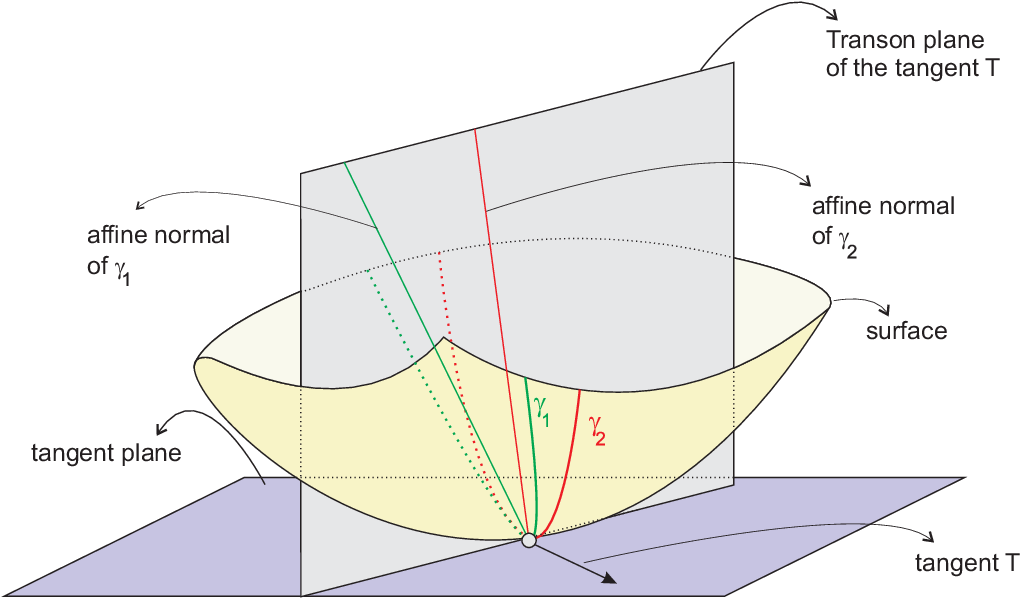}\\
  \caption{The Transon plane of the tangent $T$: Formed by the affine normal lines of the planar sections that contain $T$.}
\end{figure}

Consider now the family of all Transon planes obtained as the direction of $T$ varies. The envelope of this family is called {\it cone of B.Su} and is
obtained by solving the equations $G=G_{\xi}=G_{\eta}=0$, where
\begin{equation}
\left\{
\begin{array}{c}
G_{\xi}=\frac{1}{2}(3\xi^2+\eta^2)x+(\xi\eta) y+\left((f_3)_{\xi}\right)z\\
G_{\eta}=(\xi\eta) x+\frac{1}{2}(\xi^2+3\eta^2)y+\left((f_3)_{\eta}\right)z.
\end{array}
\right.
\end{equation}
Since $G$ is homogeneous of degree $3$, Euler's relation says that 
$$3G=\eta G_{\eta}+\xi G_{\xi}.$$
Thus the cone of B.Su is obtained by solving the equations
$G_{\xi}=G_{\eta}=0$. We shall denote a vector in the direction of this line by $s(T)=s(p,T)$.
We can thus calculate $s(T)$ as a multiple of the vector product of the normal vectors of $G_{\xi}=0$ and $G_{\eta}=0$. 
In particular, for $(\xi,\eta)=(1,0)$ we obtain
\begin{equation}\label{eq:ConeSu10}
s(1,0)=\left( -2f_{3,0}, -2f_{2,1}, 1\right).
\end{equation}

We shall consider also the osculating conics of all planar sections obtained from planes containing $T$. The following proposition is an old result of T.Moutard (\cite{Juttler},\cite{Moutard}). We give a proof for the sake of completeness.  

\begin{proposition}
The union of the osculating conics of all planar sections containing $T$ form a quadric, which is called the {\it Moutard's quadric} of the tangent $T$. 
\end{proposition}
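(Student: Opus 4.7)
The plan is to reduce to an explicit computation in the normal-form coordinates \eqref{eq:GraphS}--\eqref{eq:CubicaNormalizada}. By the affine invariance of every object involved, I may assume $p_0$ is the origin and $T=(1,0)$. The planes through the origin containing $T$ then form the one-parameter family $y=\lambda z$, $\lambda\in\mathbb{R}$, and the projection of each corresponding section to the $xz$-plane is the one already computed in the proof of Proposition~\ref{prop:Transon}:
\begin{equation*}
z=\tfrac{1}{2}x^2+f_{3,0}x^3+c_4(\lambda)x^4+O(5),\qquad c_4(\lambda)=\frac{\lambda^2}{8}+\frac{\lambda}{2}f_{2,1}+f_{4,0}.
\end{equation*}

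First I would compute the osculating conic of this planar curve at the origin. Since the curve is tangent to the $x$-axis with affine curvature $1$, the osculating conic lies in the two-parameter family $x^2+A\,xz+B\,z^2-2z=0$, and imposing $5$-point contact uniquely determines $A$ and $B$ as explicit expressions in $f_{3,0}$ and $c_4(\lambda)$. The decisive feature of this determination is that $A$ does not depend on $\lambda$, while $B$ depends on $\lambda$ only through $c_4(\lambda)$, which is affine in $\lambda$ and $\lambda^2$.

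Next I would lift each conic back to $\mathbb{R}^3$ and eliminate the parameter. The conic for parameter $\lambda$ sits in the plane $y=\lambda z$, so on that plane $\lambda z^2=yz$ and $\lambda^2 z^2=y^2$; substituting these identities into the conic equation clears every trace of $\lambda$ and leaves a single polynomial equation of degree two in $(x,y,z)$. Every osculating conic of the family satisfies it, so the Moutard surface is contained in a quadric; since $\lambda$ parameterizes a one-dimensional family of one-dimensional conics, the union is two-dimensional and therefore equals that quadric.

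The step I expect to require most care is the algebraic compatibility underlying the previous paragraph: if $c_4(\lambda)$ were of degree higher than two in $\lambda$, or if it multiplied a power of $z$ other than $z^2$ in the conic equation, then the substitution $\lambda=y/z$ would produce genuine poles in $z$ and no polynomial equation would survive. It is precisely the match between the quadratic dependence of the fourth-order Taylor coefficient on $\lambda$ and the $z^2$ factor of $B$ in the osculating conic that makes the resulting surface a true quadric rather than a rational surface; no other obstacle is anticipated.
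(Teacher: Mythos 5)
Your proposal is correct and follows essentially the same route as the paper: normalize so that $T=(1,0)$, take the pencil of planes $y=\lambda z$, compute the osculating conic of each projected section, and eliminate $\lambda$ via $\lambda=y/z$, using that the quartic coefficient is quadratic in $\lambda$ while the cubic coefficient is $\lambda$-independent. The only cosmetic difference is that you determine the conic's coefficients by imposing five-point contact on the family $x^2+Axz+Bz^2-2z=0$, whereas the paper writes the conic directly as $(x+2f_{3,0}z)^2+\mu z^2-2z=0$ using the affine normal and affine curvature from Appendix~\ref{app1}; these yield the same conic and the same final quadric.
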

\begin{proof}
Assume that $S$ is the graph of $f$ given by equation \eqref{eq:GraphS}, $p_0=(0,0,0)$ and $T=(1,0)$. Then the projection of the section of $S$ by the plane $y=\lambda z$
is given by equation \eqref{eq:proj}. By appendix \ref{app1}, the affine curvature is given by
$$
\mu=\lambda^2+4\lambda f_{2,1}+8f_{4,0}-20f_{3,0}^2.
$$
The projection of the osculating conic of this section in the plane $xz$ is given by
$$
(x+2f_{3,0}z)^2+\mu z^2-2z=0.
$$
Substituting $\lambda=y/z$ in the above equation, after some calculations we obtain that the osculating conic is contained in 
\begin{equation}
z=\frac{1}{2}(x^2+y^2)+2f_{2,1}yz+2f_{3,0}xz+4(f_{4,0}-2f_{3,0}^2)z^2,
\end{equation}
thus proving the lemma. For later reference, we remark that the center of this Moutard quadric is 
\begin{equation}\label{eq:CenterMoutard}
X=\dfrac{1}{4(2f_{4,0}-5f_{3,0}^2-f_{2,1}^2)} \left(-2f_{3,0},-2f_{2,1},1 \right).
\end{equation}
Notice that if $2f_{4,0}-5f_{3,0}^2-f_{2,1}^2=0$, the center of the Moutard quadric is at infinity. 
\end{proof}



\begin{proposition}\label{prop:centro_quadrica_moutard}
The center of the Moutard's quadric of a tangent $T$ coincides with the center of affine curvature of the intersection $\gamma$ of the plane
generated by $T$ and $s(T)$ with the surface. 
\end{proposition}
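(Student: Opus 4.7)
The plan is to work in the normalized coordinates of \eqref{eq:GraphS}--\eqref{eq:CubicaNormalizada}; since the statement is affine invariant, I may take $T=(1,0)$ without loss of generality.

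First, I would identify the specific planar section in question. By \eqref{eq:ConeSu10}, $s(T) = (-2f_{3,0}, -2f_{2,1}, 1)$, so the plane generated by $T=(1,0,0)$ and $s(T)$ has equation $y + 2f_{2,1}z = 0$. This is precisely the member of the one-parameter family $y = \lambda z$ used in the proof of Proposition~\ref{prop:Transon} and the proof of Moutard's quadric, corresponding to $\lambda = -2f_{2,1}$.

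Second, I would recycle the affine curvature formula of the planar section already computed in the proof of Moutard's quadric, namely $\mu(\lambda) = \lambda^2 + 4\lambda f_{2,1} + 8f_{4,0} - 20f_{3,0}^2$, and evaluate it at $\lambda = -2f_{2,1}$. A short computation gives
$$\mu_0 := \mu(-2f_{2,1}) = 4(2f_{4,0} - 5f_{3,0}^2 - f_{2,1}^2),$$
which is precisely the scalar appearing in the denominator of the Moutard center formula \eqref{eq:CenterMoutard}; this already signals that the two points will coincide.

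Third, I would locate the center of affine curvature of $\gamma$, i.e.\ the center of its osculating conic. In the $(x,z)$-coordinates of the section's plane the osculating conic (also already derived in the proof of Moutard's quadric) reads $(x + 2f_{3,0}z)^2 + \mu_0 z^2 - 2z = 0$; completing the square in $z$ yields center $(x,z) = (-2f_{3,0}/\mu_0,\, 1/\mu_0)$. Lifting back to three dimensions via $y = -2f_{2,1}z$ gives the point
$$\frac{1}{\mu_0}\bigl(-2f_{3,0},\, -2f_{2,1},\, 1\bigr) \;=\; \frac{1}{\mu_0}\, s(T),$$
which agrees with \eqref{eq:CenterMoutard} term by term.

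I do not foresee any real obstacle: once $\lambda = -2f_{2,1}$ is identified, the proof is a matter of assembling ingredients already built in the preceding proofs. The one minor point to confirm is that the affine curvature of the $xz$-projection really computes the affine curvature of $\gamma$ inside the plane of the section; this holds because the linear map $(x,\lambda z,z)\mapsto (x,z)$ is unimodular with respect to the natural coordinate area forms, so all equiaffine planar invariants are preserved.
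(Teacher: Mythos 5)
Your proposal is correct and follows essentially the same route as the paper: reduce to $T=(1,0)$, identify the plane generated by $T$ and $s(T)$ as $y+2f_{2,1}z=0$ (i.e.\ $\lambda=-2f_{2,1}$ in the family of sections), and check that the resulting center of affine curvature reproduces \eqref{eq:CenterMoutard}. The only cosmetic differences are that you recycle $\mu(\lambda)$ and the osculating conic from the Moutard-quadric proof (completing the square to get the center) where the paper recomputes $\mu$ from the quartic coefficient of the projected section via Appendix~\ref{app1}; both computations agree.
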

\begin{proof}
We may assume that $S$ is the graph of $f$ given by equation \eqref{eq:GraphS}, $p_0=(0,0,0)$ and $T=(1,0)$. 
The plane generated by $T$ and $s(T)$ has equation $y+2f_{2,1}z=0$. Thus, it follows from equation \eqref{eq:proj} that the 
projection of $\gamma$ in the $xz$ plane is given by
$$
z= \dfrac{1}{2}x^2+f_{3,0}x^3+\left(f_{4,0}-\dfrac{f_{2,1}^2}{2}\right)x^4+O(5).
$$
From appendix \ref{app1}, the affine curvature is given 
$\mu=-4(5f_{3,0}^2-2f_{4,0}+f_{2,1}^2)$.
Thus the affine center of curvature at the origin is given by
$$
\dfrac{1}{4(2f_{4,0}-5f_{3,0}^2-f_{2,1}^2)} \left(-2f_{3,0},-2f_{2,1},1 \right),
$$
which coincides with formula \eqref{eq:CenterMoutard}.
\end{proof}

\section{Mid-Planes}

Let $S$ be a regular surface and $p_1,p_2\in S$. Denote by $M(p_1,p_2)$ the mid-point and by $C(p_1,p_2)$ the mid-chord of $p_1$ and $p_2$, i.e., 
$$
M(p_1,p_2)=\dfrac{p_1+p_2}{2},\ \ C(p_1,p_2)=\dfrac{p_1-p_2}{2}.
$$ 
The mid-plane of $(p_1,p_2)$ is the plane that contains $M(p_1,p_2)$ and the line of intersection of the tangent planes at $p_1$ and $p_2$.
Consider $F:S\times S\times\mathbb{R}^3\to\mathbb{R}$  given by
\begin{equation}\label{eq:Normais}
F(p_1,p_2,X)=\left[ (N_1\cdot C)N_2+(N_2\cdot C)N_1\right]\cdot (X-M),
\end{equation}
where $X=(x,y,z)$, $\cdot$ denotes the canonical inner product and $N_i$ is any vector field orthogonal to $T_{p_i}S$, $i=1,2$.
It is not difficult to verify that  the equation of the mid-plane of $(p_1,p_2)$ is $F=0$. 

In this section, we shall assume that $S$ is the graph of a function $f$ defined by equations \eqref{eq:GraphS} and \eqref{eq:GraphS1}, and take
$$
N_i=(-f_x,-f_{y},1)(u_i,v_i)
$$
as normal vectors, where $p_i=(u_i,v_i,f(u_i,v_i))$, $i=1,2$. Write $\Delta u=u_1-u_2$, $\Delta v=v_1-v_2$. 

\subsection{Limit of mid-planes} 

Denote by $O(n)$ terms of order $n$ in $(u_1,u_2,v_1,v_2)$. Next lemma is the main tool to establish the connection between the envelope of mid-planes and the concepts
of Transon planes and B.Su's cone:

\begin{lemma}\label{Lemma:Main3}
We have that
\begin{equation}\label{eq:Main3}
F(u_1,v_1,u_2,v_2,X)=G(\Delta u,\Delta v,X)+O(4).
\end{equation}
where $G$ is given by equation \eqref{planodetranson}. 
\end{lemma}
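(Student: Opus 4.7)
The plan is to Taylor expand every ingredient of $F$ to total degree $3$ in the four parameters $(u_1,v_1,u_2,v_2)$ and verify that the resulting polynomial coincides with $G(\Delta u,\Delta v,X)$. It is convenient to switch to the symmetric coordinates $U=(u_1+u_2)/2$, $V=(v_1+v_2)/2$, $\Delta u$, $\Delta v$, because $(N_1\cdot C)N_2+(N_2\cdot C)N_1$ is invariant under the involution $p_1\leftrightarrow p_2$ while $G$ depends only on $(\Delta u,\Delta v)$.

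The first computation is $N_i\cdot C$. Using only the quadratic part $\tfrac{1}{2}(u^2+v^2)$ of $f$ in \eqref{eq:GraphS}, a short expansion gives
$$(N_1\cdot C)^{(2)}=-\tfrac14(\Delta u^2+\Delta v^2),\qquad (N_2\cdot C)^{(2)}=+\tfrac14(\Delta u^2+\Delta v^2),$$
so the degree-$2$ terms of $(N_1+N_2)\cdot C$ cancel exactly. This is what makes the $z$-component of the bracket in \eqref{eq:Normais} start at degree $3$, as required for comparison with $G$.

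Next I would compute the degree-$3$ part of each component of $(N_1\cdot C)N_2+(N_2\cdot C)N_1$. For the $x$- and $y$-components it suffices to pair the degree-$2$ part of $N_i\cdot C$ with the degree-$1$ part of $N_{j,x}$ and $N_{j,y}$: this produces multiples of $(\Delta u^2+\Delta v^2)\Delta u$ and $(\Delta u^2+\Delta v^2)\Delta v$. For the $z$-component, expanding $(f_3)_x(u_i)$, $(f_3)_y(u_i)$ and $f_3(u_i)$ about the midpoint $(U,V)$ produces three groups of cubic terms; the mixed pieces containing $U$ or $V$ cancel between groups, and the pieces purely in $(\Delta u,\Delta v)$ collapse, via Euler's relation $3f_3=\xi(f_3)_\xi+\eta(f_3)_\eta$, into a multiple of $f_3(\Delta u,\Delta v)$.

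Finally, since $M$ itself is of order $1$, replacing $(X-M)$ by $X$ in the dot product $F=[\cdot]\cdot(X-M)$ introduces only terms of degree $\ge 4$, which can be absorbed into the error $O(4)$. The degree-$3$ part of $F$ then has the same shape as the right-hand side of \eqref{planodetranson}, and matching coefficients establishes \eqref{eq:Main3}. The main obstacle is the $z$-component: the three cubic groups do not individually look like $f_3(\Delta u,\Delta v)$, and it is only after invoking Euler's relation for the cubic form that they combine into the clean expression appearing in $G$.
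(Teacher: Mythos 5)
Your argument is correct and is at bottom the same proof as the paper's (Appendix~B): Taylor-expand $F$ to total degree $3$ in $(u_1,v_1,u_2,v_2)$ and match the result against $G(\Delta u,\Delta v,X)$. Where you differ is in the bookkeeping, and your version is the more complete of the two. The paper computes only the order-$3$ coefficient of $x$, by brute-force substitution of $f_i=\frac12(u_i^2+v_i^2)+O(3)$, and dismisses the rest as ``similar''; but the $z$-coefficient is the only one that must produce $f_3(\Delta u,\Delta v)$, and it is not obtained by the same one-line pairing. Your decomposition covers it: the identity $(N_1\cdot C)^{(2)}=-(N_2\cdot C)^{(2)}=-\frac14(\Delta u^2+\Delta v^2)$ (which checks out) gives the $x$- and $y$-coefficients immediately and shows that the $z$-component of the bracket in \eqref{eq:Normais} starts at degree $3$; then the midpoint expansion of $f_3$, $(f_3)_x$, $(f_3)_y$ kills the terms containing $U,V$, and what survives is $-\tfrac12\Delta u\,(f_3)_{\xi}(\Delta u,\Delta v)-\tfrac12\Delta v\,(f_3)_{\eta}(\Delta u,\Delta v)+\tfrac12 f_3(\Delta u,\Delta v)=-f_3(\Delta u,\Delta v)$ by Euler's relation, exactly as you claim. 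The only step left implicit in your write-up is that the constants in the three slots agree, so that the degree-$3$ part is an overall multiple of $G$ rather than merely ``of the same shape''; carrying them through gives $-\frac12 G$ with the normalizations of $N_i$, $C$, $M$ in Section~3 --- the same harmless overall factor that appears in the paper's own computation, and irrelevant since the mid-plane is the zero set of $F$.
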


We can prove this lemma through long but straightforward calculations, the reader can find it in appendix \ref{app2}. 
This lemma has some important consequences, one of them is the following corollary:

\begin{corollary}\label{transon}
Take $p_1=(u_1,v_1,f(u_1,v_1))$ and $p_2=(u_2,v_2,f(u_2,v_2))$ such that $(\Delta u,\Delta v)=t(\xi,\eta)$, $t\in\mathbb{R}$. Then 
the limit of the mid-planes of $(p_1,p_2)$ when $t$ goes to $0$ is the Transon plane of $(\xi,\eta)$ at $p_1$.
\end{corollary}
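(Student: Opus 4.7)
The plan is to apply Lemma~\ref{Lemma:Main3} and exploit the homogeneity of $G$ in $(\xi,\eta)$. First I would translate coordinates so that $p_1$ becomes the origin and the surface takes the canonical form \eqref{eq:GraphS}. Under this setup, the parameterization $(\Delta u, \Delta v) = t(\xi, \eta)$ becomes $(u_1, v_1) = (0, 0)$ and $(u_2, v_2) = -t(\xi, \eta)$.

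Applying Lemma~\ref{Lemma:Main3} in this situation gives
\[
F\bigl(0, 0, -t\xi, -t\eta, X\bigr) = G(t\xi, t\eta, X) + O(t^4),
\]
because the order-$4$ remainder in $(u_1, v_1, u_2, v_2)$ collapses to order $4$ in $t$ once we impose $(u_1, v_1) = 0$ and $\|(u_2, v_2)\| = O(t)$. The next step is to invoke the homogeneity of $G$: inspection of formula \eqref{planodetranson} shows that each of its three terms is cubic in $(\xi,\eta)$, so $G(t\xi, t\eta, X) = t^3 G(\xi,\eta,X)$. Dividing the mid-plane equation $F = 0$ by $t^3$ (which does not change its zero set) yields
\[
G(\xi, \eta, X) + O(t) = 0.
\]
Letting $t \to 0$, the defining linear form in $X$ of the mid-plane converges coefficient-wise to that of the Transon plane $G(\xi,\eta,X) = 0$ at $p_1$.

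The only delicate points are non-degeneracy and the interpretation of the limit as a limit of planes. For the limit plane to be well-defined, the leading linear form $G(\xi,\eta,\cdot)$ in $X$ must be non-zero; this is automatic for every non-zero direction $(\xi,\eta)$, since the coefficients of $x$ and $y$ in \eqref{planodetranson} are $\tfrac{\xi}{2}(\xi^2+\eta^2)$ and $\tfrac{\eta}{2}(\xi^2+\eta^2)$, which cannot both vanish. The main obstacle --- the one place where care is needed --- is ensuring that the $O(4)$ remainder decays strictly faster than the $t^3$ leading term. This is precisely why the coordinate translation placing $p_1$ at the origin is essential: without it, the remainder would not vanish in the limit, and the argument would fail.
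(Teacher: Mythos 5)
Your proposal is correct and follows essentially the same route as the paper: apply Lemma~\ref{Lemma:Main3}, use the degree-$3$ homogeneity of $G$ in $(\xi,\eta)$ to write the mid-plane equation as $t^3G(\xi,\eta,X)+O(t^4)=0$, divide by $t^3$, and let $t\to 0$. The extra details you supply --- normalizing coordinates at $p_1$ so the $O(4)$ remainder is genuinely $O(t^4)$, and checking that the limiting linear form is non-zero --- are left implicit in the paper but are consistent with its setup.
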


\begin{proof}
It follows from Lemma \ref{Lemma:Main3} that
$t^3 G(\xi,\eta,X)=O(t^4)$.
When $t\to 0$, this equation converges to $G(\xi,\eta,X)=0$, which is the equation of the Transon plane.
\end{proof}

\subsection{Limit of envelope of mid-planes-I}

We have to consider the system
$$
\left\{
\begin{array}{ll}
F(u_1,v_1,u_2,v_2,X)=0 \\
F_{u_1}(u_1,v_1,u_2,v_2,X)=0 \\
F_{v_1}(u_1,v_1,u_2,v_2,X)=0 \\
F_{u_2}(u_1,v_1,u_2,v_2,X)=0 \\
F_{v_2}(u_1,v_1,u_2,v_2,X)=0 \\
\end{array}
\right.,
$$
which is equivalent to
\begin{equation}\label{sistema}
\left\{
\begin{array}{ll}
F(u_1,v_1,u_2,v_2,X)=0 \\
F_{u_1}(u_1,v_1,u_2,v_2,X)-F_{u_2}(u_1,v_1,u_2,v_2,X)=0 \\
F_{v_1}(u_1,v_1,u_2,v_2,X)-F_{v_2}(u_1,v_1,u_2,v_2,X)=0 \\
F_{u_1}(u_1,v_1,u_2,v_2,X)+F_{u_2}(u_1,v_1,u_2,v_2,X)=0 \\
F_{v_1}(u_1,v_1,u_2,v_2,X)+F_{v_2}(u_1,v_1,u_2,v_2,X)=0 \\
\end{array}
\right. \ .
\end{equation}

\begin{corollary}
We have that
\begin{equation}
\left\{
\begin{array}{ll}
F_{u_1}(u_1,v_1,u_2,v_2,X)-F_{u_2}(u_1,v_1,u_2,v_2,X)=2G_{\xi}(\Delta u,\Delta v)+O(3),\\
F_{v_1}(u_1,v_1,u_2,v_2,X)-F_{v_2}(u_1,v_1,u_2,v_2,X)=2G_{\eta}(\Delta u,\Delta v)+O(3)
\end{array}
\right. \ .
\end{equation}
\end{corollary}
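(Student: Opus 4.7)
The plan is to apply the chain rule directly to the expansion provided by Lemma \ref{Lemma:Main3}, exploiting the fact that the variables $u_1, u_2, v_1, v_2$ enter the leading term $G$ only through the differences $\Delta u = u_1 - u_2$ and $\Delta v = v_1 - v_2$.

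First I would write $F(u_1,v_1,u_2,v_2,X) = G(\Delta u, \Delta v, X) + R(u_1,v_1,u_2,v_2,X)$, where $R$ is a smooth function of order $O(4)$ in the variables $(u_1,v_1,u_2,v_2)$. Differentiating with respect to $u_1$ and $u_2$ and using $\partial(\Delta u)/\partial u_1 = 1$, $\partial(\Delta u)/\partial u_2 = -1$ gives
\[
F_{u_1} = G_\xi(\Delta u, \Delta v, X) + R_{u_1}, \qquad F_{u_2} = -G_\xi(\Delta u, \Delta v, X) + R_{u_2},
\]
and subtracting yields $F_{u_1} - F_{u_2} = 2 G_\xi(\Delta u, \Delta v, X) + (R_{u_1} - R_{u_2})$. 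An identical computation in the $v$-variables produces the second identity with $G_\eta$.

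The only point that needs justification is that $R_{u_1} - R_{u_2}$ (and analogously $R_{v_1}-R_{v_2}$) is $O(3)$. This follows because $R$ is smooth and vanishes to order $4$ at the origin of $(u_1,v_1,u_2,v_2)$-space, so each of its first-order partial derivatives vanishes to order $3$ there; combining them does not lower the order. Hence the difference of the derivatives remains $O(3)$, as required.

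The step that one must be careful about is strictly bookkeeping: making sure that the $O(4)$ in Lemma \ref{Lemma:Main3} is genuinely a smooth remainder (so that differentiation is permitted and lowers the order by exactly one), rather than merely a pointwise asymptotic bound. Since $F$ is a polynomial in $(u_1,v_1,u_2,v_2)$ up to smooth functions of $f$ evaluated at $(u_i,v_i)$, and $G(\Delta u,\Delta v,X)$ is explicitly polynomial in the differences, the remainder $R$ is manifestly smooth and vanishes to order $4$, so no analytic subtlety arises. The corollary then follows at once.
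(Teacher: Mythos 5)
Your proof is correct and is exactly the argument the paper intends: the paper's proof is the single sentence ``Differentiate formula \eqref{eq:Main3},'' and you have simply spelled out the chain-rule computation (with $\partial(\Delta u)/\partial u_1=1$, $\partial(\Delta u)/\partial u_2=-1$) and the observation that differentiating the smooth $O(4)$ remainder yields an $O(3)$ term. No difference in approach, only in the level of detail.
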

\begin{proof}
Differentiate formula \eqref{eq:Main3}.
\end{proof}

\begin{corollary}\label{Su}
Take $p_1=(u_1,v_1,f(u_1,v_1))$ and $p_2=(u_2,v_2,f(u_2,v_2))$ such that $(\Delta u,\Delta v)=t(\xi,\eta)$. Then 
the limit of the second and third equations of system \eqref{sistema} when $t$ goes to $0$ are $G_{\xi}=G_{\eta}=0$. As a consequence,
$X$ satisfies the first, second and third equations of the limit of system \eqref{sistema} if and only if $X$ belongs to the cone of B.Su at the origin
in the direction $(\xi,\eta)$.
\end{corollary}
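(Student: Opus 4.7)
The plan is to substitute the one-parameter family $(\Delta u, \Delta v) = t(\xi, \eta)$ into the identities of the previous corollary and exploit the homogeneity of $G$. Since $G(\xi, \eta, X)$ is homogeneous of degree three in $(\xi, \eta)$, its partial derivatives $G_\xi$ and $G_\eta$ are homogeneous of degree two, so that
\begin{equation*}
G_\xi(\Delta u, \Delta v, X) = t^2\, G_\xi(\xi, \eta, X), \qquad G_\eta(\Delta u, \Delta v, X) = t^2\, G_\eta(\xi, \eta, X).
\end{equation*}
Under the convention fixed at the beginning of Section 3, the $O(3)$ remainders (in $(u_1,v_1,u_2,v_2)$) become $O(t^3)$ along this one-parameter family.

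The next step will be to divide the second and third equations of system \eqref{sistema} by $t^2$ and invoke the previous corollary, which yields
\begin{equation*}
\frac{F_{u_1} - F_{u_2}}{t^2} = 2\,G_\xi(\xi,\eta,X) + O(t), \qquad \frac{F_{v_1} - F_{v_2}}{t^2} = 2\,G_\eta(\xi,\eta,X) + O(t).
\end{equation*}
Letting $t \to 0$ isolates the limiting equations $G_\xi(\xi,\eta,X) = 0$ and $G_\eta(\xi,\eta,X) = 0$, giving the first assertion.

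For the ``as a consequence'' part, Corollary \ref{transon} already identifies the limit of the first equation of \eqref{sistema} as the Transon plane $G(\xi,\eta,X) = 0$. Then Euler's relation $3G = \xi G_\xi + \eta G_\eta$ (noted in Section 2) shows that $G = 0$ is automatic once $G_\xi = G_\eta = 0$; thus the limit of the first three equations of \eqref{sistema} reduces to the pair $G_\xi = G_\eta = 0$, which was precisely the system defining the line of the B.Su cone at the origin in the direction $(\xi,\eta)$. The argument is essentially a bookkeeping exercise in the orders of $t$ combined with Euler's identity; I do not foresee any genuine obstacle, provided the homogeneity degrees of $G$, $G_\xi$, $G_\eta$ are handled carefully and one keeps straight that the $O$-convention refers to orders in the point coordinates rather than in $t$.
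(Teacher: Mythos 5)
Your proposal is correct and follows essentially the same route as the paper: substitute $(\Delta u,\Delta v)=t(\xi,\eta)$, use the degree-two homogeneity of $G_\xi$ and $G_\eta$ to write the second and third equations as $t^2G_\xi(\xi,\eta,X)=O(t^3)$ and $t^2G_\eta(\xi,\eta,X)=O(t^3)$, and let $t\to 0$. The Euler-relation remark for the ``as a consequence'' part is exactly how the paper identifies the B.Su cone with the system $G_\xi=G_\eta=0$ in Section 2, so nothing is missing.
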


\begin{proof}
The second equation of system \eqref{sistema} can be written as $G_{\xi}(\Delta u,\Delta v)=O(3)$. Taking pairs $(p_1,p_2)$ 
such that $(\Delta u,\Delta v)=t(\xi,\eta)$, the second equation of system \eqref{sistema} becomes $t^2G_{\xi}(\xi,\eta,X)=O(t^3)$. 
When $t\to 0$, this equation converges to $G_{\xi}(\xi,\eta,X)=0$. The same reasoning applied to the third equation of system \eqref{sistema} leads to 
the equation $G_{\eta}(\xi,\eta,X)=0$. 
\end{proof}

\subsection{Limit of envelope of mid-planes-II}

Consider $a$ and $b$ given by equation \eqref{eq:CubicaNormalizada} and define 
$$
H_1(\xi,\eta,X)=H_{11}x+H_{12}y+H_{13}z-H_{14},\ $$$$H_2(\xi,\eta,X)=H_{21}x+H_{22}y+H_{23}z-H_{24},
$$
where
$$
H_{11} = \dfrac{1}{2}a\left(5\xi^3+3\eta^2\xi\right)-b\left(3\xi^2\eta+2\eta^3\right), \ \
H_{12} =-3a\eta^3 -\dfrac{3}{2}b\left(\xi^3+3\eta^2\xi\right), 
$$
$$
H_{13} = 2f_{4,0}\xi^3 +\dfrac{3}{2}f_{3,1}\eta\xi^2+f_{2,2}\eta^2\xi+\dfrac{1}{2}f_{1,3}\eta^3, \ \ 
H_{14} = \dfrac{1}{4}(\xi^2+\eta^2)\xi,
$$
$$
H_{21} = -\dfrac{3}{2}a\left(3\xi^2\eta+\eta^3\right)-3b\xi^3, \ \
H_{22} =  -a\left(2\xi^3+3\xi\eta^2\right)+\dfrac{1}{2}b\left(3\xi^2\eta+5\eta^3\right), 
$$
$$
H_{23}  =  \dfrac{1}{2}f_{3,1}\xi^3+f_{2,2}\xi^2\eta+\dfrac{3}{2}f_{1,3}\xi\eta^2+2f_{0,4}\eta^3, \ \
H_{24}  =  \dfrac{1}{4}(\xi^2+\eta^2)\eta. 
$$

Next lemma is the main tool to connect the concept of Moutard's quadric and the envelope of mid-planes:

\begin{lemma}\label{Lemma:Main4}
We have that $F(u_1,v_1,u_2,v_2,X)$ is given by
\begin{equation}\label{eq:Main4}
F=G(\Delta u,\Delta v,X)+H_1(\Delta u,\Delta v,X) \sigma u+H_2(\Delta u,\Delta v,X) \sigma v +O(5),
\end{equation}
where $\sigma u=u_1+u_2$ and $\sigma v=v_1+v_2$.
\end{lemma}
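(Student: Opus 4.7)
The plan is to push the Taylor expansion of $F$ carried out in the proof of Lemma \ref{Lemma:Main3} (Appendix \ref{app2}) one order further. First I observe that $F$ is antisymmetric under the swap $p_1\leftrightarrow p_2$, since this swap sends $N_1\leftrightarrow N_2$, $C\mapsto -C$ and $M\mapsto M$. In the coordinates $(\Delta u,\Delta v,\sigma u,\sigma v)$ this symmetry becomes $(\Delta u,\Delta v)\mapsto-(\Delta u,\Delta v)$ with $(\sigma u,\sigma v)$ fixed. Hence every monomial in the Taylor expansion of $F$ has odd total degree in $(\Delta u,\Delta v)$. Combining with Lemma \ref{Lemma:Main3}, which identifies the total-degree-$3$ part of $F$ with $G(\Delta u,\Delta v,X)$, the degree-$4$ remainder must be a sum of monomials of bidegree $(3,1)$ or $(1,3)$ in $(\Delta,\sigma)$.

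Next I carry out the Taylor expansion explicitly from \eqref{eq:GraphS}--\eqref{eq:CubicaNormalizada}: $N_i=(-f_x,-f_y,1)(u_i,v_i)$ is expanded to order $3$ in $(u_i,v_i)$, so that the coefficients of $f_4$ enter, and the $z$-components of $C$ and $M$, namely $\tfrac{1}{2}(f(p_1)\mp f(p_2))$, are expanded to order $4$. It is convenient to substitute $u_i=(\sigma u\pm\Delta u)/2$ and $v_i=(\sigma v\pm\Delta v)/2$ at the outset, and to use the decomposition $N_i=\bar N\pm\delta N$ with $\bar N=(N_1+N_2)/2$, $\delta N=(N_1-N_2)/2$; then
\[
F = 2(\bar N\cdot C)(\bar N\cdot(X-M))-2(\delta N\cdot C)(\delta N\cdot(X-M)),
\]
and the parity in $(\Delta u,\Delta v)$ of each factor is fixed, which greatly reduces the bookkeeping.

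Sorting the degree-$4$ terms of the resulting polynomial by the factor of $x$, $y$, $z$, and separating the coefficients of $\sigma u$ from those of $\sigma v$, one reads off the eight coefficients $H_{ij}$. The $z$-coefficients $H_{13}$ and $H_{23}$ pick up only the $f_4$ contribution to $N_i$; the $x$- and $y$-coefficients $H_{11},H_{12},H_{21},H_{22}$ are bilinear in the cubic coefficients $a,b$ of $f_3$ and come from the expansion of $(N_i\cdot C)N_j$; and the $X$-independent constants $H_{14},H_{24}$ arise exclusively from the $-M_z$ contribution, $M_z$ inheriting a quadratic piece from $\tfrac{1}{2}(x^2+y^2)$. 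One must also verify that the bidegree-$(1,3)$ terms cancel at this order, which is not implied by the parity argument alone but falls out of the direct computation. The main obstacle is not conceptual but the bulk of the calculation: several Taylor series must be multiplied and sorted with care. It is a direct continuation of the work in Appendix \ref{app2} and is best performed symbolically.
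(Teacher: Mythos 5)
Your overall route is the same as the paper's: Lemma \ref{Lemma:Main4} is proved in Appendix \ref{app2} by brute-force Taylor expansion of $F$ to fourth order and collection of the coefficients of $x$, $y$, $z$ and $1$. Your two organizing devices are genuine improvements on that: the antisymmetry $F(p_1,p_2,X)=-F(p_2,p_1,X)$ does force every monomial to have odd degree in $(\Delta u,\Delta v)$, so the quartic part can only have bidegree $(3,1)$ or $(1,3)$ in $(\Delta,\sigma)$ (and you rightly note that killing the $(1,3)$ part still requires computation); and the identity $F=2(\bar N\cdot C)(\bar N\cdot(X-M))-2(\delta N\cdot C)(\delta N\cdot(X-M))$ is correct and reduces the bookkeeping, the paper does neither of these things.

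There are, however, two problems. First, your attribution of $H_{14},H_{24}$ to ``the $-M_z$ contribution'' is wrong, and following it would give $H_{14}=H_{24}=0$. The quadratic part of $M_z=\tfrac12(f_1+f_2)$ can only pair with the $z$-components of $N_1,N_2$, i.e.\ with the quadratic part of $(N_1\cdot C)+(N_2\cdot C)$ (equivalently of $\bar N\cdot C$), and that quadratic part vanishes identically: the degree-two pieces of $N_1\cdot C$ and $N_2\cdot C$ are $\mp\tfrac14(\Delta u^2+\Delta v^2)$ and cancel. The constants actually arise from pairing the cubic (Transon) part of the $x$- and $y$-components of the bracket with $M_x=\tfrac12\sigma u$ and $M_y=\tfrac12\sigma v$ --- which is exactly why $H_{14}=\tfrac14\xi(\xi^2+\eta^2)$ is one half of the $x$-coefficient of $G$. (Relatedly, $H_{11},\dots,H_{22}$ are \emph{linear}, not bilinear, in $a,b$.) Second, since the entire content of the lemma is the explicit values of the eight $H_{ij}$, a proof must actually produce at least a representative coefficient, as the paper does for the coefficient of $x$; as written, your argument is a (mostly sound) plan for the computation rather than the computation itself.
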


The proof of this lemma is a long but straightforward calculation, the reader can find it in appendix \ref{app2}. 

\begin{corollary}
We have that
\begin{equation}
\left\{
\begin{array}{ll}
F_{u_1}(u_1,v_1,u_2,v_2,X)+F_{u_2}(u_1,v_1,u_2,v_2,X)=2H_1(\Delta u,\Delta v)+O(4),\\
F_{v_1}(u_1,v_1,u_2,v_2,X)+F_{v_2}(u_1,v_1,u_2,v_2,X)=2H_2(\Delta u,\Delta v)+O(4)
\end{array}
\right. \ .
\end{equation}
\end{corollary}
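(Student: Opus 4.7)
The plan is to differentiate the expansion of $F$ given in Lemma \ref{Lemma:Main4} with respect to $u_1$ and $u_2$ separately and then add the resulting expressions, exploiting a symmetry of the variables $\Delta u = u_1-u_2$ and $\sigma u = u_1+u_2$ under this sum of partial derivatives.

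The key observation is the following chain-rule identity: if $\phi$ is a smooth function of $\Delta u$ and $\Delta v$ only, then
\[
\partial_{u_1}\phi + \partial_{u_2}\phi \;=\; \phi_{\Delta u}\cdot 1 + \phi_{\Delta u}\cdot(-1) \;=\; 0,
\]
since $\partial_{u_1}\Delta u=1$, $\partial_{u_2}\Delta u=-1$, and $\Delta v$ does not depend on $u_1,u_2$. By contrast, $\sigma u$ is a symmetric variable with $\partial_{u_1}\sigma u+\partial_{u_2}\sigma u=2$, while $\sigma v$ is constant in $u_1,u_2$. Applying these rules termwise to Lemma \ref{Lemma:Main4}, the term $G(\Delta u,\Delta v,X)$ contributes zero to $F_{u_1}+F_{u_2}$. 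In the term $H_1(\Delta u,\Delta v,X)\,\sigma u$, Leibniz's rule gives
\[
(\partial_{u_1}H_1+\partial_{u_2}H_1)\,\sigma u + H_1\,(\partial_{u_1}\sigma u+\partial_{u_2}\sigma u) \;=\; 0 + 2H_1,
\]
since $H_1$ depends on $u_1,u_2$ only through $\Delta u$. The term $H_2(\Delta u,\Delta v,X)\,\sigma v$ yields zero, because $\sigma v$ is independent of $u_1,u_2$ and $H_2$ again depends only on the difference variables. Summing, this produces the claimed identity for $F_{u_1}+F_{u_2}$.

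The computation for $F_{v_1}+F_{v_2}$ is entirely parallel: now the $H_2\,\sigma v$ term contributes $2H_2$, while $G$ and $H_1\,\sigma u$ both contribute zero by exactly the same parity argument. Finally, the remainder $O(5)$ in the $(u_1,v_1,u_2,v_2)$ variables becomes $O(4)$ after a single differentiation, so the displayed asymptotic order is preserved. There is no substantive obstacle in this proof: everything reduces to the chain rule together with the opposite-parity behaviour of the $\Delta$-variables versus the $\sigma$-variables under $\partial_{u_1}+\partial_{u_2}$ and $\partial_{v_1}+\partial_{v_2}$.
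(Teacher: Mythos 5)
Your proposal is correct and is exactly the paper's argument: the paper's proof consists of the single instruction ``Differentiate equation \eqref{eq:Main4}'', and you have simply carried out that differentiation, using that $\partial_{u_1}+\partial_{u_2}$ annihilates functions of $\Delta u,\Delta v$ while sending $\sigma u\mapsto 2$ and $\sigma v\mapsto 0$, and that the $O(5)$ remainder drops to $O(4)$. No discrepancy to report.
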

\begin{proof}
Differentiate equation \eqref{eq:Main4}.
\end{proof}

\begin{corollary}
The limit of the system \eqref{sistema} defining the envelope of mid-planes is the system 
\begin{equation}\label{eq:SystemEvolute}
G=G_{\xi}=G_{\eta}=H_1=H_2=0.
\end{equation}
\end{corollary}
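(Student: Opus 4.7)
The plan is to apply the expansion identities from Lemmas \ref{Lemma:Main3} and \ref{Lemma:Main4} and the two preceding corollaries equation by equation, and then pass to the limit along a fixed tangent direction $(\xi,\eta)$.

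First I would fix the natural parametrization for the limit: take sequences of pairs $(p_1,p_2)$ with $(\Delta u,\Delta v)=t(\xi,\eta)$ and both $p_1,p_2\to p_0$, which forces $\sigma u,\sigma v\to 0$. Writing $u_i=(\sigma u \pm \Delta u)/2$ (and analogously for $v_i$), any $O(k)$ remainder in the variables $(u_1,v_1,u_2,v_2)$ becomes $O(t^k)$, provided $\sigma u,\sigma v$ tend to zero at least as fast as $t$. In this regime every equation of \eqref{sistema} has a nontrivial leading term depending only on $(\xi,\eta,X)$, which is the setting in which the limit system is well defined.

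Next, I would handle each of the five equations by identifying its leading order, rescaling, and letting $t\to 0$. For the equation $F=0$, Lemma \ref{Lemma:Main4} together with the homogeneity of degree three of $G,H_1,H_2$ in $(\Delta u,\Delta v)$ yields $F=t^3 G(\xi,\eta,X)+O(t^4)$; dividing by $t^3$ and passing to the limit reproduces Corollary \ref{transon} in the form $G(\xi,\eta,X)=0$. For the antisymmetric equations $F_{u_1}-F_{u_2}=0$ and $F_{v_1}-F_{v_2}=0$, the corollary following Lemma \ref{Lemma:Main3} gives the expansions $2t^2 G_\xi(\xi,\eta)+O(t^3)$ and $2t^2 G_\eta(\xi,\eta)+O(t^3)$; rescaling by $t^2$ and taking $t\to 0$ yields $G_\xi=G_\eta=0$ (this is the content of Corollary \ref{Su}). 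Finally, for the symmetric equations $F_{u_1}+F_{u_2}=0$ and $F_{v_1}+F_{v_2}=0$, the corollary following Lemma \ref{Lemma:Main4} gives $2t^3 H_1(\xi,\eta,X)+O(t^4)$ and $2t^3 H_2(\xi,\eta,X)+O(t^4)$; rescaling by $t^3$ and passing to the limit produces $H_1=H_2=0$. Assembling the five limit identities gives the claimed system $G=G_\xi=G_\eta=H_1=H_2=0$.

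The only point of care, and what I expect to be the main obstacle at the level of exposition, is that the five equations live at two different scales in $t$: order $t^2$ for the antisymmetric equations, order $t^3$ for the symmetric ones and for $F$ itself. One therefore has to rescale each equation by its own leading power of $t$ before taking the limit, otherwise the $t^3$-order equations would collapse to the trivial identity $0=0$. Once this scale-by-scale bookkeeping is in place, the corollary follows immediately from the already established expansions, with no further computation required.
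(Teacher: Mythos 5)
Your proposal is correct and follows essentially the same route as the paper: the paper's proof simply takes $(\Delta u,\Delta v)=t(\xi,\eta)$, invokes the earlier corollaries for the first three equations, and reads off $H_1=H_2=0$ from the expansion of Lemma \ref{Lemma:Main4} for the last two. Your additional bookkeeping on the different leading powers of $t$ and on the requirement $\sigma u,\sigma v=O(t)$ is a legitimate (and welcome) elaboration of details the paper leaves implicit, not a different argument.
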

\begin{proof}
Take  pairs $(p_1,p_2)$ such that $(\Delta u,\Delta v)=t(\xi,\eta)$.
Then the limit of the fourth and fifth equations when $t$ goes to zero are $H_1(\xi,\eta,X)=0$
and $H_2(\xi,\eta,X)=0$, respectively.
\end{proof}

\begin{proposition}\label{moutard}
If system \eqref{eq:SystemEvolute} admits a solution, then this solution is the center of the Moutard's quadric of $T=(\xi,\eta)$.
\end{proposition}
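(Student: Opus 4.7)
The strategy is to combine Corollary~\ref{Su} with a short one-variable linear computation. By that corollary, any $X$ satisfying $G=G_\xi=G_\eta=0$ must lie on the line of the B.Su cone in the direction $T=(\xi,\eta)$, and by Proposition~\ref{prop:centro_quadrica_moutard} the center of the Moutard's quadric of $T$ lies on this very same line. Since the line is one-dimensional, identifying the two amounts to showing that the fourth equation $H_1=0$ forces the one free parameter to the value recorded in \eqref{eq:CenterMoutard}.

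First I would use the rotational covariance of the construction in the $(x,y)$-plane---rotations preserve the normalizations \eqref{eq:GraphS}--\eqref{eq:CubicaNormalizada}---to assume without loss of generality that $T=(\xi,\eta)=(1,0)$. By Corollary~\ref{Su} and formula \eqref{eq:ConeSu10} one may then write
$$
X=\lambda\bigl(-2f_{3,0},\,-2f_{2,1},\,1\bigr),\qquad \lambda\in\mathbb{R}.
$$

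Next I would evaluate $H_1$ along this parametrized line. Specializing the stated coefficients to $(\xi,\eta)=(1,0)$ and using $a=f_{3,0}$ and $b=-f_{2,1}/3$ from \eqref{eq:CubicaNormalizada}, one obtains $H_{11}=\tfrac{5}{2}f_{3,0}$, $H_{12}=\tfrac{1}{2}f_{2,1}$, $H_{13}=2f_{4,0}$, $H_{14}=\tfrac{1}{4}$, and a direct substitution collapses to
$$
H_1(1,0,X)=\lambda\bigl(2f_{4,0}-5f_{3,0}^{2}-f_{2,1}^{2}\bigr)-\tfrac{1}{4}.
$$
Setting this to zero yields $\lambda=\bigl[4(2f_{4,0}-5f_{3,0}^{2}-f_{2,1}^{2})\bigr]^{-1}$, which is precisely the scalar factor in \eqref{eq:CenterMoutard}. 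Hence $X$ is the center of Moutard's quadric of $T$, as claimed.

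The fifth equation $H_2=0$ is not needed for this identification; as a second affine-linear condition on a one-parameter line, it is either automatically compatible with $H_1=0$ or rules out any solution. Substituting the value of $\lambda$ above back into $H_2$ for a general direction $(\xi,\eta)$ produces a single polynomial relation in $(\xi,\eta)$ which I expect to coincide with the degree-six condition on $T$ announced in the introduction---this is why the affine mid-planes evolute consists of at most six points at each base point $p_0$. I do not foresee a serious technical obstacle; the main argument is a linear-algebra verification on the B.Su line, and the only genericity assumption needed is the open condition $2f_{4,0}-5f_{3,0}^{2}-f_{2,1}^{2}\neq 0$, which is the usual non-degeneracy ensuring that the Moutard center is well defined.
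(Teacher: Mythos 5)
Your proposal is correct and follows essentially the same route as the paper: reduce to $T=(1,0)$ and solve a small linear system, the only cosmetic difference being that you first parametrize the B.Su line $G_\xi=G_\eta=0$ by $\lambda$ and then impose $H_1=0$, whereas the paper solves $G=G_\eta=H_1=0$ directly as a $3\times 3$ system (equivalent by Euler's relation at $(1,0)$). Your coefficient evaluations ($H_{11}=\tfrac{5}{2}f_{3,0}$, $H_{12}=\tfrac{1}{2}f_{2,1}$, the value of $\lambda$) all check out against \eqref{eq:CenterMoutard}.
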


\begin{proof}
We may assume that the tangent $(\xi,\eta)$ is $(1,0)$.
Taking $\xi=1, \eta=0$ in the system $G=G_{\eta}=H_1=0$ we get
$$
\left\{
\begin{array}{ccc}
  \dfrac{1}{2}x+az & = & 0 \\
  \dfrac{1}{2}y-3bz & = & 0 \\
    \dfrac{5}{2}ax-\dfrac{3}{2}by+2f_{4,0}z & = & \dfrac{1}{4} \\
\end{array}
\right. \ .
$$
The solution of this system is
$$
(x,y,z)= \dfrac{1}{4(2f_{4,0}-5a^2-9b^2)} \left( -2a, 6b, 1 \right),
$$
which is exactly the center of the Moutard's quadric of $T=(1,0)$ at the origin (see formula \eqref{eq:CenterMoutard}).
\end{proof}

\section{Affine Mid-Planes Evolute}

Any direction in the tangent plane of $S$ at $p$ can be represented by a vector $T$ with $h(T)=1$, where $h$
denotes the Blaschke metric. Thus we can write $(p,T)\in T^1S$, where $T^1S$ denotes the unit tangent bundle of $S$.

Since $G=G_{\xi}=G_{\eta}=0$ are linearly dependent, we shall discard the equation $G=0$.
Denote by $D:T^1S\longrightarrow \mathbb{R}$ the determinant of the extended matrix of 
$$
G_{\xi}=G_{\eta}=H_1=H_2=0.
$$
Then the system \eqref{eq:SystemEvolute} admits a solution if and only if $D(p,T)=0$.

\begin{lemma}\label{lemma:q}
For each $p\in S$, the system \eqref{eq:SystemEvolute} admit solutions if and only if $T=(\xi,\eta)$ is a root of a homogeneous polynomial of degree $6$.
 \end{lemma}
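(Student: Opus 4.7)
The plan is to reduce \eqref{eq:SystemEvolute} to a single scalar consistency condition on $T$ and then extract the extraneous factors of $\xi^{2}+\eta^{2}$.

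By Euler's relation $3G=\xi G_{\xi}+\eta G_{\eta}$, the equation $G=0$ is a consequence of $G_{\xi}=G_{\eta}=0$, so \eqref{eq:SystemEvolute} reduces to four linear equations in the three unknowns $X=(x,y,z)$, and its consistency is controlled by the $4\times 4$ determinant $D(p,T)$ of the augmented matrix. Because the rows $G_{\xi},G_{\eta}$ are homogeneous of degree $2$ in $(\xi,\eta)$ and the rows $H_{1},H_{2}$ of degree $3$, $D(p,T)$ is homogeneous of total degree $10$. Expanding along the constant column---which has zeros in the first two rows---and using $\det[v_{1};v_{2};w]=(v_{1}\times v_{2})\cdot w=s(T)\cdot w$, where $v_{1},v_{2}$ are the coefficient vectors of $G_{\xi},G_{\eta}$, one obtains
\begin{equation*}
D(p,T)=H_{24}(T)\,\alpha_{1}(T)-H_{14}(T)\,\alpha_{2}(T),\qquad \alpha_{i}(T):=s(T)\cdot(H_{i1},H_{i2},H_{i3}).
\end{equation*}

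Two factors of $\xi^{2}+\eta^{2}$ can now be extracted. Since $H_{14}=\tfrac{1}{4}(\xi^{2}+\eta^{2})\xi$ and $H_{24}=\tfrac{1}{4}(\xi^{2}+\eta^{2})\eta$, the first comes out immediately:
\begin{equation*}
D(p,T)=\frac{\xi^{2}+\eta^{2}}{4}\bigl(\eta\,\alpha_{1}(T)-\xi\,\alpha_{2}(T)\bigr).
\end{equation*}
For the second factor, I would evaluate $\eta\alpha_{1}-\xi\alpha_{2}$ at the two complex null directions $\eta=\pm i\xi$. A short calculation yields $s_{3}=\tfrac{3}{4}(\xi^{2}+\eta^{2})^{2}$, so $s_{3}$ already vanishes there and only $s_{1},s_{2}$ contribute to $\alpha_{i}$. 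Substituting \eqref{eq:CubicaNormalizada} and exploiting the identity $b+ai=i(a-bi)$, one checks that $\alpha_{2}=i\,\alpha_{1}$ at $\eta=i\xi$, hence $\eta\alpha_{1}-\xi\alpha_{2}=i\xi\alpha_{1}-\xi(i\alpha_{1})=0$; complex conjugation handles $\eta=-i\xi$. Therefore $\xi^{2}+\eta^{2}$ divides $\eta\alpha_{1}-\xi\alpha_{2}$, and consequently
\begin{equation*}
D(p,T)=\frac{(\xi^{2}+\eta^{2})^{2}}{4}\,\tilde P(\xi,\eta),
\end{equation*}
with $\tilde P$ homogeneous of degree $10-4=6$. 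Since $\xi^{2}+\eta^{2}$ has no real zeros, the real directions $T$ for which \eqref{eq:SystemEvolute} admits a solution are exactly the real roots of $\tilde P$.

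The hard part is the second factorization: the first $\xi^{2}+\eta^{2}$ factor is read directly off $H_{14},H_{24}$, but the second one is not attached to any single entry of the matrix and is produced only by the combined algebraic structure of $s(T)$ and the cubic invariants $(a,b)$ at the complex null directions; once that cancellation is in hand, the rest is a routine degree count.
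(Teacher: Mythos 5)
Your proposal is correct, and it reaches the paper's conclusion by a genuinely different route. The paper's proof is a direct computation: it simply asserts that ``straightforward calculations'' yield the explicit factorization $D(p,T)=\tfrac{3}{32}(\xi^2+\eta^2)^2\,q(\xi,\eta)$ and writes out the degree-$6$ polynomial $q=12q_3+q_4$ coefficient by coefficient. You instead derive the factorization structurally: cofactor expansion of the augmented determinant along the constants column gives $D=H_{24}\alpha_1-H_{14}\alpha_2$ with $\alpha_i=s(T)\cdot(H_{i1},H_{i2},H_{i3})$, the first factor of $\xi^2+\eta^2$ is read off $H_{14},H_{24}$, and the second is obtained by checking that $\eta\alpha_1-\xi\alpha_2$ vanishes on the complex null lines $\eta=\pm i\xi$ (I verified the key identities: at $\eta=i\xi$ one has $s_3=0$, $s_2=is_1$, $H_{11}=H_{22}=\xi^3(a-bi)$ and $H_{12}=-H_{21}=3\xi^3(b+ai)$, so indeed $\alpha_2=i\alpha_1$ there). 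The degree count $10-4=6$ then finishes the argument. What your approach buys is an explanation of \emph{why} the factor $(\xi^2+\eta^2)^2$ appears, without ever computing $q$; what it gives up is the explicit polynomial, which the paper uses later (e.g.\ in the example with $f_4=0$ and in discussing simple roots of $q$). Both arguments share the same unaddressed fine points --- that vanishing of the $4\times4$ augmented determinant is taken as equivalent to solvability (which presupposes the coefficient matrix has full rank), and that the degree-$6$ factor is not identically zero --- so these are not gaps specific to your write-up.
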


\begin{proof}
We may assume $p=(0,0,0)$, $T=(\xi,\eta)$ and $S$ is the graph of $f$ given by equation \eqref{eq:CubicaNormalizada}. Straightforward calculations show that  
$$
D(p,T) = \frac{3}{32}(\xi^2+\eta^2)^2 \cdot q(\xi,\eta),
$$
where $q(\xi,\eta)=12q_3(\xi,\eta)+q_4(\xi,\eta)$ is a homogeneous polynomial of degree $6$,  $q_3$ is given by
$$
ab\xi^6+3(a^2-b^2)\xi^5\eta-15ab\xi^4\eta^2+10(b^2-a^2)\xi^3\eta^3+15ab\xi^2\eta^4+3(a^2-b^2)\xi\eta^5-ab\eta^6
$$
and
$$
q_4=-f_{3,1}\xi^6+(4f_{4,0}-2f_{2,2})\xi^5\eta+(2f_{3,1}-3f_{1,3})\xi^4\eta^2+4(f_{4,0}-f_{0,4})\xi^3\eta^3
$$
$$
+(3f_{3,1}-2f_{1,3})\xi^2\eta^4+(2f_{2,2}-4f_{0,4})\xi\eta^5+f_{1,3}\eta^6.
$$
Thus $D=0$ admits at most $6$ solutions.
\end{proof}

\begin{corollary}
For each $p\in S$, the system \eqref{eq:SystemEvolute} admits a solution for at most $6$ values of the direction $T$.
\end{corollary}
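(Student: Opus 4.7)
The plan is to deduce the corollary directly from Lemma \ref{lemma:q}. That lemma reduces the solvability of system \eqref{eq:SystemEvolute} at a fixed point $p\in S$ to the vanishing of $D(p,T)$, and the computation there factors $D(p,T)$ as $\frac{3}{32}(\xi^2+\eta^2)^2\,q(\xi,\eta)$ with $q$ a homogeneous polynomial of degree $6$ in $(\xi,\eta)$. Since $\xi^2+\eta^2$ has no real zeros, the real directions of interest are cut out entirely by $q(\xi,\eta)=0$.

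The key step is then a standard fact about binary forms: any nonzero homogeneous polynomial of degree $6$ in two variables has at most $6$ distinct zeros as a function on the real projective line, because over $\mathbb{C}$ it factors as a product of $6$ linear forms (with multiplicity), each of which vanishes on a single projective direction. Identifying a direction of $T$ with the class $[\xi:\eta]$ (equivalently, with a unit vector $T$ satisfying $h(T)=1$, up to sign), this yields the claimed bound of $6$.

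The only subtle point is the degenerate case $q\equiv 0$, in which every direction would formally be a zero of $D(p,\cdot)$. The explicit formula for $q$ recorded in Lemma \ref{lemma:q}, which mixes the cubic-form coefficients $(a,b)$ with the fourth-order coefficients $f_{i,j}$ ($i+j=4$), shows that $q\equiv 0$ imposes many independent conditions on the jet of $f$ and therefore fails generically; at such generic points the bound is strict and the corollary is the statement we want. I expect this to be the main (and essentially only) obstacle, and it is disposed of by appealing to the explicit form of $q$ rather than by any further argument.
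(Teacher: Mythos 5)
Your argument is essentially the paper's own: the corollary is drawn immediately from Lemma \ref{lemma:q}, the point being that the real directions solving the system are the projective zeros of the binary sextic $q$ (the factor $(\xi^2+\eta^2)^2$ contributing none), and a nonzero degree-$6$ form in two variables has at most $6$ such zeros. Your aside on the degenerate case $q\equiv 0$ flags something the paper passes over silently, though note that ``fails generically'' does not literally salvage the statement ``for each $p\in S$'' at a point where $q$ vanished identically --- both your proof and the paper's tacitly assume $q\not\equiv 0$ at the point in question.
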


\begin{example}
Consider a point $p\in S$ with $f_4=0$. We may assume, by a rotation of the tangent plane, that $b=0$. Then
$$
q_3=3a^2\xi^5\eta-10a^2\xi^3\eta^3+3a^2\xi\eta^5.
$$
This polynomial has exactly six roots, $(\xi,\eta)=(\cos\frac{k\pi}{6},\sin\frac{k\pi}{6})$, $0\leq k\leq 5$. 
\end{example}

For each $p\in S$ and $T_i(p)$, $1\leq i\leq 6$, given by the above corollary, define $X_i(p)$ as the solution of the system \eqref{eq:SystemEvolute} and write $E(p)=\cup_{i=1}^6 X_i(p)$. Up to our knowledge, the set $E(p)$ has not yet been considered. We call the set $E(p)$, $p\in S$, the {\it Affine Mid-Planes Evolute} of $S$.

\section{Regularity of the Affine Mid-Planes Evolute}

In this section we study the regularity of the branches of the Affine Mid-Planes Evolute. We begin by showing that, under certain conditions, the vector fields $T_i(p)$, $1\leq i\leq 6$, and the corresponding map $X_i(p)$ are smooth. 

\begin{lemma}
Let $p_0\in S$ and assume that there exists $(\xi_0,\eta_0)\in T_{p_0}S$ which is simple root of $q(p_0)$, where $q$ is defined in Lemma \ref{lemma:q}. 
Then there exists a neighborhood $U$ of $p_0$ and a map $\left(Id,(\xi,\eta)\right):U\to T^1S$ such that $(\xi,\eta)(p_0)=(\xi_0,\eta_0)$ and  $(\xi,\eta)(p)$
is a simple root of $q(p)$. Denoting by $X(p)$ the center of the Moutard quadric of $(\xi,\eta)(p)$, the map $X:U\to\mathbb{R}^3$ is differentiable.
\end{lemma}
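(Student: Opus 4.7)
The plan is to apply the implicit function theorem to the equation $q(p,T)=0$ on the unit tangent bundle, and then to use Cramer's rule on the linear system defining the Moutard center to obtain smoothness of $X(p)$. Over a neighborhood of $p_0$ I would trivialize $T^1S$ by choosing a smooth $h$-orthonormal frame $\{e_1(p),e_2(p)\}$ on $TS$ and parametrizing directions by $T(p,\theta)=\cos\theta\,e_1(p)+\sin\theta\,e_2(p)$. Substituting this into $q$ produces a smooth function $\tilde q(p,\theta)$, and the hypothesis that $(\xi_0,\eta_0)$ is a simple root of $q(p_0)$ reads as $\tilde q(p_0,\theta_0)=0$ together with $\partial_\theta\tilde q(p_0,\theta_0)\neq 0$. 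The implicit function theorem then delivers a neighborhood $U$ of $p_0$ and a smooth $\theta:U\to\mathbb{R}$ with $\theta(p_0)=\theta_0$ and $\tilde q(p,\theta(p))\equiv 0$; setting $(\xi,\eta)(p)=(\cos\theta(p),\sin\theta(p))$ gives the required smooth section of $T^1S$. Since simplicity of a root is an open condition, after shrinking $U$ we may assume $(\xi,\eta)(p)$ is a simple root of $q(p)$ throughout $U$.

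For the differentiability of $X$, I would use the characterization of Proposition \ref{moutard}: $X(p)$ is the unique solution of the linear system $G_\xi=G_\eta=H_1=H_2=0$ in the variable $X\in\mathbb{R}^3$ evaluated at $(p,\xi(p),\eta(p))$. The $4\times 3$ coefficient matrix and the right-hand side depend polynomially on $(\xi,\eta)$ and on the Taylor coefficients of $f$ at $p$, hence smoothly on $p$, and the identity $q(p,\xi(p),\eta(p))\equiv 0$ ensures consistency. Since $X(p_0)$ is well defined as a single point---the center of the Moutard quadric at $(p_0,\xi_0,\eta_0)$---the coefficient matrix has rank $3$ at $p_0$; by continuity some fixed $3\times 3$ minor remains non-singular on a (possibly smaller) neighborhood of $p_0$, and Cramer's rule then expresses $X(p)$ as a smooth function of $p$ on this neighborhood.

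The main obstacle is precisely this last non-degeneracy issue: one needs the Moutard quadric at $(p_0,\xi_0,\eta_0)$ to be non-degenerate, equivalently the denominator $2f_{4,0}-5f_{3,0}^2-f_{2,1}^2$ in \eqref{eq:CenterMoutard} (read in the frame adapted to $(\xi_0,\eta_0)$) to be non-zero, so that the Moutard center actually exists in $\mathbb{R}^3$ rather than escaping to infinity. This is the generic condition tacitly required whenever one speaks of a branch of the mid-planes evolute through $X(p_0)$; being open, it persists on a neighborhood of $p_0$ and the argument closes. The rest is routine bookkeeping: smoothness of the chosen frame $\{e_1,e_2\}$ follows from standard orthonormalization of a local frame with respect to the Blaschke metric, and the polynomial expression for the coefficient matrix of the linear system is already recorded in the definitions of $G$, $H_1$, $H_2$ in Section 3.
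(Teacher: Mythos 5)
Your proposal is correct and follows essentially the same route as the paper: the implicit function theorem applied at a simple root of $q(p_0)$ gives the smooth section $(\xi,\eta)(p)$, and the explicit (rational, hence smooth) formula for the Moutard center gives differentiability of $X$. You rightly flag the tacit non-degeneracy requirement that the denominator in \eqref{eq:CenterMoutard} be non-zero so that the center exists and varies smoothly, a point the paper's one-line proof passes over in silence.
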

\begin{proof}
The first claim follows from the implicit function theorem and the second one from the formula of the center of the Moutard quadric. 
\end{proof}

From now on, we shall assume that, for a given branch of the Affine Mid-Planes Evolute, $T=T(p)$ and $X=X(p)$ are smooth functions of $p\in U$. 
We now look for conditions under which this branch of the  Affine Mid-Planes Evolute is a regular surface.

Assume that the frame $\{T_1(p),T_2(p)\}$ is $h$-orthonormal and that the cubic form vanishes at $T_1(p)$. Then $S\times S$
is locally parameterized by
$$
(p,x,y)\to \left( p, (xT_1(p)+yT_2(p)+f(p,x,y)\zeta(p)) \right),
$$
where $\zeta(p)$ is the affine normal vector at $p$. We may write
$$
f(p,x,y)=\frac{1}{2}(x^2+y^2)+b(p)(y^3-3yx^2)+O(4)(p)(x,y),
$$
where $b^2(p)$ is the Pick invariant of $S$ at $p$. Consider $\mathbf{G}:T^1S\to \mathbb{R}^3$ given by 
$$
\mathbf{G}(p,\xi,\eta)=\left(  \frac{\xi}{2}(\xi^2+\eta^2),\frac{\eta}{2}(\xi^2+\eta^2),b(p)(\eta^3-3\eta\xi^2) \right).
$$
Then, for fixed $(p,\xi,\eta)$, denoting $X=(x,y,z)$, $\mathbf{G}(p,\xi,\eta)\cdot X=0$ is the equation of the Transon plane at $p$ in the direction $(\xi,\eta)$.

Consider a curve $p=p(t)$, $p(0)=p_0$, along the surface $S$ and denote by $T(t)=(\xi(t),\eta(t))$ and $X(t)$ the corresponding values of $T$ and $X$ along the branch. 

\begin{lemma}\label{lemma:Transon1}
Assume that  the cubic form at $p$ does not vanish at $(\xi,\eta)(0)$. 
Then $X'(0)$ belongs to the Transon plane if and only if $b'(0)=0$.
\end{lemma}
\begin{proof}
Write $\mathbf{G}(t)=\mathbf{G}(p(t),\xi(t),\eta(t))$. Differentiating $\mathbf{G}(t)\cdot X=0$ at $t=0$ we obtain 
$$
\mathbf{G}_{\xi}\cdot X\ \xi'+\mathbf{G}_{\eta}\cdot X\ \eta'+\mathbf{G}\cdot X'+b'(\eta^3-3\eta\xi^2)z=0.
$$
For $X=(x,y,z)$ center of Moutard's quadric, $\mathbf{G}_{\xi}\cdot X=\mathbf{G}_{\eta}\cdot X=0$. 
Since the cubic form at $p$ does not vanish at $(\xi,\eta)(0)$, we conclude that $\mathbf{G}\cdot X'=0$ if and only 
if $b'(0)=0$, thus proving the lemma.
\end{proof}

It is not easy to find an explicit example of a branch of the affine mid-planes evolute. The main problem is to find, at each point of the surface, a root of the degree six polynomial $q$. Next example is a very particular case where
the calculations can be easily done.

\begin{example}
Consider the homogeneous surface
$$
S=\{ (u,v,w)\in\R^3| \ uvw=1, u>0, v>0, w>0\},
$$
which is also an affine sphere (\cite[p.97]{Nomizu}). 
Let $G\subset SL(3,\R)$ be the subgroup of matrices $g=g(u,v)$, $u\in\R^{+}$, $v\in\R^{+}$, where  
$$
g(u,v)=
\left[
\begin{array}{ccc}
u & 0 & 0\\
0 & v & 0\\
0 & 0 & \frac{1}{uv}
\end{array}
\right]
$$
The $G$ preserves $S$ and every point of $S$ can be write as $g(u,v)\cdot p_0$, where $p_0=(1,1,1)$. Thus we can calculate a root of 
$q(\eta,\xi)$ and the corresponding center of the Moutard quadric at $p_0$ and then apply $g(u,v)$ to obtain the same data for the other points.
At $p_0=(1,1,1)$, consider the tangent basis 
$$
T_1=\sqrt{2}
\left[
\begin{array}{c}
1/2 \\
1/2 \\
-1
\end{array}
\right] ; \ \ 
T_2=\sqrt{6}
\left[
\begin{array}{c}
1/2 \\
-1/2 \\
0
\end{array}
\right].
$$
From the calculations of \cite[p.97]{Nomizu}, one easily verifies that $h(T_1,T_1)=h(T_2,T_2)=1$, $h(T_1,T_2)=0$, i.e.,
$\{T_1,T_2\}$ is an $h$-orthonormal basis of $T_{p_0}S$. The affine normal at $p_0$ is $\zeta=p_0=(1,1,1)$. 

Consider the  affine change of variables
$$
(u,v,w)=p_0+sT_1+tT_2+r\zeta.
$$
In the $(s,t,r)$ coordinates, $p_0$ corresponds to $(0,0,0)$, $\{(1,0,0),(0,1,0)\}$ 
is an $h$-orthonormal basis of $T_{p_0}S$ and $(0,0,1)$ is the affine normal. 
If we write the $uvw=1$ in the $(s,t,r)$ variables and solve for $r$ we obtain 
$$
r=\frac{1}{2}(s^2+t^2) +\frac{\sqrt{2}}{6}\cdot(s^3-3st^2)+ O(5).
$$
From example 4.3, we obtain that $(\xi,\eta)=(1,0)$ is a root of $q$. Then we can use Proposition 3.8
to obtain the center of the Moutard quadric of $(\xi,\eta)=(1,0)$,
$(s,t,r)=\left(\frac{3\sqrt{2}}{10}, 0, -\frac{9}{10}\right)$,
which corresponds to 
$$
(x_0,y_0,z_0)=\left( \frac{2}{5}, \frac{2}{5}, -\frac{1}{2} \right).
$$
For a general point $(u,v,w)\in S$, the tangent vector
$g(u,v)\cdot T_1$ is a root of $q$, and the corresponding point of the Moutard quadric is 
$$
g(u,v)\cdot (x_0,y_0,z_0)=\left( \frac{2u}{5}, \frac{2v}{5}, -\frac{1}{2uv} \right).
$$
Thus these equations define a branch of the affine mid-planes evolute. The implicit equation of this branch 
is $uvw=-\frac{2}{25}$. Observe that the tangent plane to this branch at $(u,v)=(1,1)$ is 
$$
5\left(u-\frac{2}{5} \right)+5\left(v-\frac{2}{5}\right)-4\left(w+\frac{1}{2}\right)=0,
$$
which coincides with the Transon plane of $S$ at $(p_0,T_1)$. Since in this example $b=\frac{\sqrt{2}}{6}$ is constant,
this result agrees with Lemma \ref{lemma:Transon1}.
\end{example}

\begin{lemma}\label{lemma:Transon2}
Assume that  the cubic form at $p$ vanishes at $(\xi,\eta)(0)$. 
Then necessarily $X'(0)$ belongs to the Transon plane. Moreover, 
$X'(0)$ is in the cone of B.Su if and only if $b'(0)=0$.
\end{lemma}

\begin{proof}
The first claim is a direct consequence of the proof of Lemma \ref{lemma:Transon1}. 
For the second claim, we may assume that $(\xi(0),\eta(0))=(1,0)$. For simplicity, we shall also assume that $9b^2-2f_{4,0}\neq 0$ so that
the center of the Moutard quadric is finite. Thus, at $t=0$, $X$ is given by equation \eqref{eq:CenterMoutard} with $f_{3,0}=0$, $f_{2,1}=-3b(0)$, i.e.,
$$
(x,y,z)(0)=-\frac{1}{4(9b^2-2f_{4,0})}\left(0, 6b(0),1 \right).
$$ 
Observe that
$$
\mathbf{G}_{\eta}(p,\xi,\eta)=\left(  \xi\eta,\frac{1}{2}(\xi^2+3\eta^2),3b(p)(\eta^2-\xi^2) \right).
$$
Differentiating $\mathbf{G}_{\eta}(t)\cdot X(t)=0$ we obtain
$$
\mathbf{G}_{\eta\xi}\cdot X\ \xi'+\mathbf{G}_{\eta\eta}\cdot X\ \eta'+\mathbf{G}_{\eta}\cdot X' +3b'(\eta^2-\xi^2)z=0.
$$
Now take $t=0$ and use $\xi(0)=1$, $\eta(0)=0$ to obtain
$$
\left(0,1,-6b(0)\right)\cdot X\ \xi'+\left( 1, 0, 0\right)\cdot X\ \eta'+\mathbf{G}_{\eta}\cdot (X')-3b'(0)z=0.
$$
Using the above formula for $(x,y,z)$ we get
$$
\mathbf{G}_{\eta}\cdot X'+\frac{3b'(0)}{4(9b^2-2f_{4,0})}=0,
$$
thus proving the lemma.
\end{proof}

\begin{lemma}\label{lemma:Su1}
Assume $p'(0)=T$ and denote by $\gamma$ the intersection of $S$ with the plane generated by $T$ and $s(T)$. If $\mu_\gamma'(0)\neq 0$, where $\mu_\gamma$ denotes the affine curvature of $\gamma$, $X'(0)$ is a non-zero vector in the direction of the cone of B.Su.
\end{lemma}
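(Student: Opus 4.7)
The plan is to mimic the proof of Lemma \ref{lemma:Transon}: differentiate each of the five equations $G=G_\xi=G_\eta=H_1=H_2=0$ at $t=0$ along the curve $p(t)$ and analyze the resulting linear equations for $X'(0)$, using the hypothesis $p'(0)=T$ and the explicit form of the Moutard center $X(0)$. The geometric backbone is Proposition \ref{prop:centro_quadrica_moutard}, which identifies $X(0)$ with the affine center of curvature of $\gamma$ at $p_0$. For a planar curve the classical affine Frenet formula gives that the tangent to the affine evolute at $p_0$ is $-\mu_\gamma'(0)/\mu_\gamma(0)^2\cdot\nu_\gamma(0)$, where $\nu_\gamma(0)$ is the affine normal of $\gamma$; since $\gamma$ lies in the plane $\Pi_0$ generated by $T$ and $s(T)$, and $\nu_\gamma(0)$ is transverse to $T$ inside $\Pi_0$, it is parallel to $s(T)$. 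So the goal is to recognize $X'(0)$ as this affine evolute tangent.

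For the direction claim I would note that $X'(0)$ lies along $s(T)$ precisely when $G_\xi(T,X'(0))=G_\eta(T,X'(0))=0$, since these two linear equations cut out the B.Su line. Differentiating $G_\xi(p(t),T(t),X(t))=0$ at $t=0$ gives an equation of the form $G_\xi(T,X'(0))=-\partial_p G_\xi\cdot p'(0)-G_{\xi\xi}(T,X_0)\,\xi'(0)-G_{\xi\eta}(T,X_0)\,\eta'(0)$, and analogously for $G_\eta$. The decisive step is to show that both right-hand sides vanish. This uses the relations $G_\xi(T,X_0)=G_\eta(T,X_0)=0$ (from $X_0$ being on the B.Su line), the normalization $h(T(t))=1$ relating $\xi'(0)$ and $\eta'(0)$, the branch constraint $q(p(t),T(t))=0$ determining $\eta'(0)$ in terms of $p'(0)$, and the specific hypothesis $p'(0)=T$, which aligns the $\partial_p$-contributions of the coefficients $b(p),f_{4,0}(p),\ldots$ with the $\partial_\xi,\partial_\eta$-contributions so that they cancel.

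For non-vanishing, I would then differentiate $H_1(p(t),T(t),X(t))=0$ at $t=0$. Writing $X'(0)=\lambda\,s(T)$ from the previous step, the linearized equation reduces to a single scalar equation determining $\lambda$, which after simplification should equal a nonzero constant times $\mu_\gamma'(0)/\mu_\gamma(0)^2$, matching the classical affine evolute formula. Thus $X'(0)\neq 0$ iff $\mu_\gamma'(0)\neq 0$, completing the proof. The hardest part of the plan will be verifying the cancellations in the second paragraph: they require careful bookkeeping of how the Taylor coefficients of $f$ vary with $p$ in the moving frame $\{T_1(p),T_2(p),\zeta(p)\}$, and of the identities forced by the branch constraint $q(p,T(p))=0$.
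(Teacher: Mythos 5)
The geometric argument in your first paragraph is precisely the paper's proof, and it already suffices: by Proposition \ref{prop:centro_quadrica_moutard}, $X(0)$ is the affine center of curvature of $\gamma$, and since $p'(0)=T$ is tangent to $\gamma$, the point $X(t)$ moves to first order along the affine evolute of $\gamma$, whose tangent is $-\mu_\gamma'(0)/\mu_\gamma(0)^2$ times the affine normal of $\gamma$; that affine normal spans the B.Su line and the coefficient is non-zero exactly when $\mu_\gamma'(0)\neq 0$. The paper stops there. Your second and third paragraphs then propose to re-derive this by linearizing the five equations $G=G_\xi=G_\eta=H_1=H_2=0$ along $p(t)$; this is a genuinely different route (and in principle a more careful one, since the section plane $\Pi_0$ itself varies with $p$, a point the one-line geometric argument quietly ignores), but as written it is only a program: the ``decisive step'' that the right-hand sides of the linearized $G_\xi,G_\eta$ equations vanish, and the final identification of $\lambda$ with a multiple of $\mu_\gamma'(0)/\mu_\gamma(0)^2$, are asserted rather than carried out, and that is exactly where all the work lies. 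So rest the proof on your first paragraph; if you insist on the computational route, it is not yet complete. One small repair to the first paragraph: ``transverse to $T$ inside $\Pi_0$'' does not by itself force $\nu_\gamma(0)$ to be parallel to $s(T)$. What does is that the affine normal of every section through $T$ lies in the Transon plane, so for the section by $\Pi_0=\mathrm{span}\{T,s(T)\}$ it must lie on the line $\Pi_0\cap(\textrm{Transon plane})$, which is spanned by $s(T)$; this is also visible by comparing \eqref{eq:CenterMoutard} with \eqref{eq:ConeSu10}.
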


\begin{proof}
Since $X$ is a point of the affine evolute of $\gamma$ and $T$ is tangent to this curve,  $X'(0)$ is in the direction of the its affine normal, which belongs to the cone of B.Su
(see Proposition \ref{prop:centro_quadrica_moutard}).
\end{proof}

If we assume that $S$ is given by \eqref{eq:CubicaNormalizada}, $p=(0,0)$ and $T(p)=(1,0)$, the condition $\mu_\gamma'(0)\neq 0$ can be explicitly 
described. Expanding $f$ until order $5$, it is straightforward to verify that the projection of $\gamma$
in the $xz$ plane is given by
$$
z=\frac{1}{2}x^2+ax^3+\left(f_{4,0}-\frac{9}{2}b^2\right)x^4+\left(-27ab^2+3bf_{3,1}+f_{5,0}\right)x^5 +O(6).
$$
From appendix \ref{app1}, it follows that 
$$
\mu_\gamma'(0)=\frac{1}{27}\left( 9a_5+40a_3^3-45a_3a_4\right),
$$
where $a_3=6a$, $a_4=24\left(f_{4,0}-\frac{9}{2}b^2\right)$ and 
$a_5=120\left(-27ab^2+3bf_{3,1}+f_{5,0}\right)$.

\begin{proposition}
Assume that $p$ is not critical for the Pick invariant and that $\mu_\gamma'(0)\neq 0$.
Then the corresponding branch of the Affine Mid-Planes Evolute is smooth at $p$. 
\end{proposition}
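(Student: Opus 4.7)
The plan is to verify that the smooth map $p\mapsto X(p)$ provided by the preceding lemma is an immersion at $p_0$, i.e.\ that $dX_{p_0}:T_{p_0}S\to\mathbb{R}^3$ has rank $2$. To this end I would produce two curves $p_1,p_2:(-\varepsilon,\varepsilon)\to S$ with $p_i(0)=p_0$ and linearly independent velocities $v_i=p_i'(0)\in T_{p_0}S$, and check that the derivatives $X_i'(0)=dX_{p_0}(v_i)$ are linearly independent in $\mathbb{R}^3$; the inverse function theorem then delivers that $X(U)$ is a regular surface near $X(p_0)$.

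For the first curve, take $p_1$ with $p_1'(0)=T(p_0)$. Since $\mu_\gamma'(0)\neq 0$, Lemma~\ref{lemma:Su1} guarantees that $X_1'(0)$ is a nonzero vector along the line of the cone of B.Su at $(p_0,T(p_0))$, and in particular lies in the Transon plane of $T(p_0)$ at $p_0$. For the second curve I invoke the hypothesis that $p_0$ is not critical for the Pick invariant: this forces $db(p_0)\neq 0$ in $T_{p_0}^*S$, so $\ker db(p_0)$ is a one-dimensional subspace. I can therefore choose $v_2\in T_{p_0}S$ that is simultaneously linearly independent from $T(p_0)$ and outside $\ker db(p_0)$; along any curve $p_2$ with $p_2'(0)=v_2$ we have $b'(0)\neq 0$, so Lemma~\ref{lemma:Transon} is applicable.

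I would then split into the two cases of Lemma~\ref{lemma:Transon}. If $T(p_0)$ does not vanish the cubic form at $p_0$, the lemma yields that $X_2'(0)$ does not lie in the Transon plane, while $X_1'(0)$ does; the two vectors are therefore linearly independent in $\mathbb{R}^3$. If instead $T(p_0)$ vanishes the cubic form at $p_0$, the lemma places $X_2'(0)$ in the Transon plane but off the B.Su line, whereas $X_1'(0)$ is a nonzero vector along the B.Su line; again the two vectors are linearly independent. In both cases $dX_{p_0}$ has rank $2$, and the branch is a smooth regular surface at $X(p_0)$.

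The computational heavy lifting has already been done in Lemmas~\ref{lemma:Transon} and~\ref{lemma:Su1}, so the residual work is essentially organizational. The only delicate point is confirming that the two transversality requirements on $v_2$ (to $\mathbb{R}\,T(p_0)$ and to $\ker db(p_0)$) can be met simultaneously, which is automatic because the complement of two lines in a plane is nonempty. I do not expect any further obstacle.
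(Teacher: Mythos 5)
Your proposal is correct and follows essentially the same route as the paper: the paper likewise takes the derivative of $X$ along $T$ (nonzero on the B.Su line by Lemma \ref{lemma:Su1}) and along a second direction $W$ with nonvanishing derivative of $b$ (available since $p$ is not critical for the Pick invariant), and concludes linear independence from Lemma \ref{lemma:Transon}. Your explicit case split on whether $T(p_0)$ annihilates the cubic form merely spells out what the paper leaves implicit in ``$X_W$ is not a multiple of $X_T$.''
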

\begin{proof}
From Lemma \ref{lemma:Su1}, $X_T$ is a non-zero vector in the direction of the cone of B.Su. Take a direction $W\neq T$
such that the derivative of $b$ in this direction in non-zero. From Lemmas \ref{lemma:Transon1} and \ref{lemma:Transon2}, $X_W$ is not a multiple of $X_T$,
thus proving the proposition.
\end{proof}


\bibliographystyle{amsplain}

\appendix

\section{Affine curvature of a planar curve}\label{app1}

In this appendix we prove the following result:
Let $\gamma$ be a planar curve such that $\gamma(0)=(0,0)$. Assume that, close to the origin, $\gamma$ is the graph of 
\begin{equation}
g(x)=\frac{1}{2}x^2+\frac{a_3}{6}x^3+\frac{a_4}{24}x^4+\frac{a_5}{120}x^5+O(6).
\end{equation}
Then the affine normal vector at the origin is given by $(-\frac{a_3}{3},1)$, the affine curvature at the origin is 
\begin{equation}\label{eq:FormulaMu}
\mu= \frac{1}{9}\left( 3a_4-5a_3^2 \right)
\end{equation}
and the derivative of the affine curvature with respect to affine arc-length is
\begin{equation}\label{eq:FormulaMuLinha}
\mu'= \frac{1}{27}\left( 9a_5+40a_3^3-45a_3a_4 \right).
\end{equation}

\begin{proof}
Differentiate  $\gamma(x)=\left(x,g(x)\right)$ two times with respect to the affine arc-length parameter to obtain $x_s=1$. Differentiate one more to obtain $x_{ss}=-\frac{a_3}{3}$, which implies 
first claim. Differentiating again we obtain 
$$
x_{sss}=-\frac{1}{3}a_4+\frac{5}{9}a_3^2,
$$
which together with 
\begin{equation}\label{eq:Mus}
\mu(s)=[\gamma''(s),\gamma'''(s)]
\end{equation}
implies equation \eqref{eq:FormulaMu}. Finally differentiate once more to obtain 
$$
x_{ssss}=-\frac{1}{3}a_5-\frac{5}{3}a_3^3+\frac{16}{9}a_3a_4.
$$
Differentiating equation \eqref{eq:Mus} and applying this formula we obtain formula \eqref{eq:FormulaMuLinha}.
\end{proof}

\section{Proofs of lemmas \ref{Lemma:Main3} and \ref{Lemma:Main4}}\label{app2}

Assume $f$ is given by equation \eqref{eq:CubicaNormalizada}. Then
\begin{equation*}
F=\left[(N_1\cdot C)N_2+(N_2\cdot C)N_1\right] \cdot (X-M),
\end{equation*}
where 
\begin{equation*}
C=\left( \Delta u, \Delta v, f_1-f_2\right),\ \ M=\frac{1}{2}\left( \sigma u, \sigma v, f_1+f_2\right),
\end{equation*}
and 
\begin{equation*}
N=\left( -(x+(f_3)_x), -(y+(f_3)_y),1 \right).
\end{equation*}

Thus
$$
(N_1\cdot C)N_2=\left[ -(u_1+(f_3)_x(1)) \Delta u- (v_1+(f_3)_y(1)) \Delta v+(f_1-f_2)\right] N_2,
$$
$$
(N_2\cdot C)N_1=\left[ -(u_2+(f_3)_x(2)) \Delta u- (v_2+(f_3)_y(2)) \Delta v+(f_1-f_2)\right] N_1,
$$
where $(f_3)_x(i)=(f_3)_x(u_i,v_i)$, and similarly for $(f_3)_y$.

To prove Lemma \ref{Lemma:Main3}, we must show that the expansion of $F$ at third order is $G(\Delta u,\Delta v,X)$.
We calculate the coefficient of order $3$ of $x$, the others being similar. Writing $\sigma u=u_1+u_2$, $\sigma v=v_1+v_2$, we can write 
$$
f_1-f_2=\frac{1}{2}(\Delta u\sigma u+\Delta v\sigma v)+O(3).
$$
Thus the $x$ coefficient of the expansion of $F$ at third order is
$$
-\left( u_1\Delta u+v_1\Delta v  \right)u_2-\left( u_2\Delta u+v_2\Delta v  \right)u_1+\frac{1}{2}(\Delta u\sigma u+\Delta v\sigma v)\sigma u=
$$
$$
=\Delta u\left(  -2u_1u_2+\frac{1}{2}(\sigma u)^2 \right)+\Delta v\left(  -v_1u_2-v_2u_1+\frac{1}{2}\sigma u\sigma v \right)
$$
$$
=\frac{1}{2}\Delta u^3+\frac{1}{2}(\Delta v)^2\Delta u.
$$
Thus, from formula \eqref{planodetranson}, we obtain that this coefficient is equal to the $x$ coefficient of $G(\Delta u,\Delta v,X)$.

\bigskip\bigskip

To prove Lemma \ref{Lemma:Main4}, we must show that the expansion of $F$ at order $4$ is 
$H_1(\Delta u,\Delta v)\sigma u+H_2(\Delta u,\Delta v)\sigma v$.
We shall calculate the coefficient of order $4$ of $x$, the others being similar. This coefficient is given by
$$
\left( -u_1\Delta u-v_1\Delta v +\frac{1}{2}(u_1^2-u_2^2+v_1^2-v_2^2) \right)(f_3)_x(2)
$$
$$
+\left( -u_2\Delta u-v_2\Delta v +\frac{1}{2}(u_1^2-u_2^2+v_1^2-v_2^2) \right)(f_3)_x(1)
$$
$$
+\left( -(f_3)_x(1)\Delta u-(f_3)_y(1)\Delta v +(f_3(1)-f_3(2)) \right)u_2
$$
$$
+\left( -(f_3)_x(2)\Delta u-(f_3)_y(2)\Delta v + (f_3(1)-f_3(2)) \right)u_1.
$$
Since $f_3$ is homogeneous of degree $3$, $(f_3)_xu_i+(f_3)_yv_i=3f_3(i)$. Thus this expression is equal to
$$
+\frac{1}{2}\left( (\Delta u)^2+(\Delta v)^2  \right) \left( (f_3)_x(1)- (f_3)_x(2) \right)
$$
$$
+\left( -2f_3(1)-f_3(2) +(f_3)_x(1)u_2+(f_3)_y(1)v_2\right)u_2
$$
$$
+\left( 2f_3(2)+ f_3(1)-(f_3)_x(2)u_1-(f_3)_y(2)v_1  \right)u_1.
$$
Consider first only terms in $a$. The first parcel leads to
$$
\frac{3a}{2}(\Delta u^2+\Delta v^2)(\Delta u\sigma u-\Delta v\sigma v).
$$
The latter two parcels lead to
$$
2au_1u_2(-u_1^2+3v_1^2+u_2^2-3v_2^2)+a(u_1^4-u_2^4)-3a(u_1^2+u_2^2)\Delta v\sigma v
$$
$$
=a\left[  \Delta u^3\sigma u-3\Delta v\Delta u^2\sigma v \right]
$$
Summing all we obtain 
$$
\left(\frac{5a}{2}\Delta u^3+\frac{3a}{2}\Delta u\Delta v^2\right)\sigma u-\left(\frac{9a}{2}\Delta u^2\Delta v+\frac{3a}{2}\Delta v^3\right)\sigma v.
$$
\noindent
Consider now terms in $b$:
$$
3(\Delta u^2+\Delta v^2)\left(u_2v_2-u_1v_1\right)
$$
$$
+\left[-2(v_1^3-3v_1u_1^2)-(v_2^3-3v_2u_2^2)-6u_1v_1u_2+3(v_1^2-u_1^2)v_2\right]u_2
$$
$$
+\left[2(v_2^3-3v_2u_2^2)+(v_1^3-3v_1u_1^2)+6u_2v_2u_1-3(v_2^2-u_2^2)v_1\right]u_1
$$
After some simplifications we obtain:
$$
6\Delta u^2(u_2v_2-u_1v_1)+6v_1v_2\Delta v\sigma u-2\Delta v\sigma u(v_1^2+v_1v_2+v_2^2)=
$$
$$
=6\Delta u^2(u_2v_2-u_1v_1)-2\Delta v^3\sigma u=-3\Delta u^3\sigma v-3\Delta u^2\Delta v\sigma u-2\Delta v^3\sigma u.
$$
Thus the coefficient of order $4$ of $x$ is 
$$
H_{11}(\Delta u, \Delta v)\sigma u+H_{21}(\Delta u,\Delta v)\sigma v,
$$
as we wish to prove.

\end{document}